\documentclass[12pt]{article}
\newcommand{\nocopyright}{
No Copyright\thanks{
The authors hereby waive all copyright
and related or neighboring rights to this work,
and dedicate it to the public domain.
This applies worldwide.
}}
\title{Blowing bubbles on the torus}
\author{Peter Doyle \and Jean Steiner}
\date{Original version dated 1 October 2005\\ Addendum dated 14 April 2009
\\ \nocopyright}

\usepackage{amsmath,amscd,amssymb, amsthm, color, graphics,  color,epsfig,bm,epsf,times}

\setlength{\evensidemargin}{0in}
\setlength{\oddsidemargin}{0in}
\setlength{\textwidth}{6.5in}

\newcommand{\mathsym}[1]{{}}


\newcommand{\note}[1]{}
\theoremstyle{plain}
\newtheorem{theorem}{Theorem}

\newtheorem{prop}{Proposition}
\newtheorem{cor}{Corollary}
\theoremstyle{definition}
\newtheorem{definition}{Definition}
\newtheorem{convention}{Convention}
\theoremstyle{remark}
\newtheorem{rmk}{Remark}

\newcommand{\dif}{\mathrm{d}}

\newcommand{\etwophi}{e^{2\phi}}



\def\bi{\begin{itemize}}
\def\ei{\end{itemize}}
\def\ben{\begin{enumerate}}
\def\een{\end{enumerate}}
\def\be{\begin{equation*}}
\def\ee{\end{equation*}}
\def\bea{\begin{eqnarray*}}
\def\eea{\end{eqnarray*}}
\def\ie{{\it i.e.\ }}

\begin{document}

\maketitle
\begin{abstract}
We consider the regularized trace 
of the inverse of the Laplacian on a skinny torus.
With its flat metric, a skinny torus has large trace,
but we show that there are conformally equivalent metrics
making the trace close to that of a sphere of the same area.
This behavior is in sharp contrast to that of the log-determinant,
a well-known spectral invariant which is extremized at
the flat metric on any torus.
Our examples are bubbled tori, where you take a sphere, discard polar regions,
and glue top to bottom.
In a addendum,
we belatedly notice that our bubbled tori
have trace \emph{less than} the sphere,
and outline how to exploit this to get
Okikiolu's result that
by means of a conformal factor depending only on longitude,
any torus can be made to have trace less than the sphere.

\vspace{0.5cm}
{\bf AMS 2000 Mathematics Subject Classification }\quad
58J65, 60J05, 58J50, 53A30
\end{abstract}

\section{Introduction}\label{sec.intro}
On a surface, the Laplacian operator, $\Delta$  is a linear operator with discrete eigenvalues $\{\lambda_j\}_{j=0}^\infty$.   As any good linear algebra student might hope, it turns out that the determinant and the trace are interesting spectral invariants, which describe geometric features of the underlying surface.   The results in this paper concern the behavior of regularized trace on surfaces of genus $1$, however the regularized trace is closely connected to the determinant, so in order to contextualize the present  work, we briefly recall some relevant highlights of the rich and varied story for the determinant.


The log-determinant of the Laplacian is a spectral invariant that is defined through a zeta regularization process.  Heuristically, the determinant is defined by  $ \det(\Delta))=\exp[-\sum \lambda_j ]= \exp[-Z'(0)]$, where $Z(s)=\sum \lambda_j^{-s}$ is the zeta function and a meromorphic continuation is a required to give a  rigorous  definition for the determinant.   The study of the log-determinant has proven to be a rich area for the Laplacian on surfaces and in many other contexts. To hint at reach of the log-determinant on surfaces, we recall some  of the results in the oft-cited work of Osgood, Phillips, and Sarnak in \cite{Ops}, in which connections are given between this spectral invariant  and results concerning the uniformization theorem on surfaces,  the re-normalized Ricci flow,  critical Sobolev inequalities, and  number theory.    The extremal properties of the log-determinant demonstrate that this functional `hears' the constant curvature metric within the conformal class of the metric.  The following theorem descibes the extremal behavior within conformal classes for any compact surface without boundary, and it also describes the extremal behavior across conformal classes for surfaces of genus $1$.
\begin{theorem}[Osgood-Phillips-Sarnak \cite{Ops}]\label{thm.ops} On $(M,g)$, among all metrics conformally equivalent to $g$  with a  fixed area, $\det(\Delta)$  is maximized at the constant curvature metric.\\
Among all tori, $(T^2, g)$ with  fixed area, $\det(\Delta)$ is maximized at the (properly scaled) torus corresponding to the hexagonal lattice $\mathbb{R}^2/\Lambda$, where $\Lambda=<(1, 0), (\frac{1}{2} , \frac{\sqrt{3}}{2})>$.
\end{theorem}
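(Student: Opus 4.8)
The plan is to treat the two assertions separately, driven in both cases by the Polyakov conformal variation formula for the regularized determinant $\det{}'\Delta$ (the prime signifying omission of the zero eigenvalue). Writing a conformal metric on a closed surface $M$ as $g=e^{2\phi}g_0$, this formula reads
\[
\log\det{}'\Delta_{g}=\log\det{}'\Delta_{g_0}+\log\frac{A_g}{A_{g_0}}-\frac{1}{12\pi}\int_M\bigl(|\nabla_0\phi|^2+2K_0\phi\bigr)\,dA_0,
\]
where $A$ is area, $K_0$ is the Gauss curvature of $g_0$, and $\nabla_0,\Delta_0$ are taken with respect to $g_0$ (with $\Delta_0$ the nonnegative Laplacian). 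For the first assertion I would fix $A_g=A$ and note that maximizing $\det{}'\Delta_g$ over the conformal class is then equivalent to minimizing the Dirichlet-type functional $S[\phi]=\int_M(|\nabla_0\phi|^2+2K_0\phi)\,dA_0$ subject to $\int_M e^{2\phi}\,dA_0=A$. With a Lagrange multiplier $\mu$, the Euler--Lagrange equation is $\Delta_0\phi+K_0=\mu\,e^{2\phi}$, which by the curvature law $K_g\,e^{2\phi}=K_0+\Delta_0\phi$ states precisely that $K_g\equiv\mu$ is constant; so the critical metrics are exactly the constant-curvature representatives.

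It remains to promote this critical point to a global maximum. The subtlety is that although $S$ is convex in $\phi$, the constraint $\int_M e^{2\phi}\,dA_0=A$ is not a convex constraint, so convexity does not immediately close the argument. My plan is to obtain coercivity and the existence of a minimizer from a Moser--Trudinger inequality, which bounds $\log\int_M e^{2\phi}\,dA_0$ above by a multiple of $\int_M|\nabla_0\phi|^2\,dA_0$ together with lower-order terms; in its sharp form (the Onofri inequality when $M=S^2$) this inequality both supplies the minimizer and pins it to the constant-curvature metric. I expect this sharp Moser--Trudinger/Onofri estimate to be the main analytic obstacle of the first part.

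For the second assertion I would first invoke uniformization: every metric on $T^2$ is conformal to a flat one, and a flat torus has zero (hence constant) curvature, so the first assertion reduces the problem to maximizing $\det{}'\Delta$ over flat tori of a fixed area, normalized to $1$. These are parametrized by the modular parameter $\tau=x+iy\in\mathbb{H}$ modulo $\mathrm{PSL}(2,\mathbb{Z})$; the eigenvalues of $\Delta$ form an explicit family $\propto|m+n\tau|^2/y$ indexed by the dual lattice, so the spectral zeta function is an Epstein zeta function and the Kronecker limit formula evaluates
\[
\det{}'\Delta_\tau\ \propto\ y^{2}\,|\eta(\tau)|^{4},
\]
with $\eta$ the Dedekind eta function. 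Since $\sqrt{y}\,|\eta(\tau)|^2$ is $\mathrm{PSL}(2,\mathbb{Z})$-invariant, the function $\Phi(\tau)=y^2|\eta(\tau)|^4$ descends to moduli space. Writing $\eta(\tau)=q^{1/24}\prod_{n\ge1}(1-q^n)$ with $q=e^{2\pi i\tau}$ shows $\Phi\to0$ at the cusp $y\to\infty$, so $\Phi$ attains its maximum at an interior critical point. The order-three symmetry fixing the corner $\tau=e^{i\pi/3}=\tfrac12+\tfrac{\sqrt3}{2}i$ forces $\Phi$ to be critical there, and this is exactly the point representing the hexagonal lattice $\Lambda=\langle(1,0),(\tfrac12,\tfrac{\sqrt3}{2})\rangle$; verifying that this symmetric critical point is the global maximum, rather than the competing square-lattice point $\tau=i$ or a saddle, requires the explicit behavior of $\eta$ and is the delicate endgame of the proof.
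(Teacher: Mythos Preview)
The paper does not supply its own proof of this theorem: it is quoted as a background result from Osgood--Phillips--Sarnak, accompanied only by the one-line remark that in genus $\geq 1$ the extremization ``is a straightforward consequence of understanding how $\det(\Delta)$ changes under a conformal change of metric,'' i.e.\ of the Ray--Singer--Polyakov formula. So there is no proof in the paper to compare your proposal against line by line.

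Your outline is nonetheless the standard OPS argument and is correct in substance. One point worth sharpening, and implicit in the paper's remark: for genus $\geq 1$ you may take the reference metric $g_0$ to have constant curvature $K_0\leq 0$, and on a torus $K_0=0$, so the Polyakov functional collapses to $S[\phi]=\int_M|\nabla_0\phi|^2\,dA_0$, which is trivially minimized (globally, with no constraint needed) at $\phi$ constant. Thus no Moser--Trudinger or Onofri inequality is required in genus $\geq 1$; the sharp Onofri estimate is only the ``main analytic obstacle'' in genus $0$. Your treatment of the second assertion via the Kronecker limit formula and the modular analysis of the Dedekind eta function is exactly the OPS route, and the paper later records essentially this formula (its equation \eqref{e.regtraceineta}) by way of Chiu's functional-equation argument. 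One small slip: since $\sqrt{y}\,|\eta(\tau)|^2$ is the $\mathrm{PSL}(2,\mathbb{Z})$-invariant quantity, the function to analyze is $\Phi(\tau)=y\,|\eta(\tau)|^4$, not $y^2|\eta(\tau)|^4$.
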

On surfaces of genus one or higher, the extremization of the log-determinant is a straightforward consequence of understanding how $\det(\Delta)$ changes under a conformal change of metric.  The behavior of $\det(\Delta)$ is described by the Ray-Singer-Polyakov formula, which was considered by Polyakov  in \cite{Polyakov1} and \cite{Polyakov2}, and by Ray and Singer in \cite{Raysinger}.   

Heuristically, the trace is given by  $`Tr(\Delta^{-1})=\sum \frac{1}{\lambda_j}'$, but of course this sum does not converge, so a regularization is required in order to make a useful definition.  We will work with the $\tilde{Z}(1)$ (defined subsequently in Equation \ref{e.defregzeta}), where the regularization is made {\it via } the spectral zeta function defined in \eqref{e.defzeta}.   The regularized trace can also be  by considering a mass-like term in the Green's function for the Laplacian, and the comparison of these two regularizations is noted in Proposition \ref{p.massisspecdensity}, and the statement   appears in Morpurgo's work in \cite{Morpurgo_Duke} and in the work of Steiner(?the second author?) in \cite{steiner_duke}.    In contrast to the story of the log-determinant, the story for the regularized trace, $\tilde{Z}(1)$, is quite incomplete.  Part of the motivation for the present work comes from a result of Okikiolu in \cite{Okikiolu}, which shows that the density for the regularized trace emerges as a condition for criticality of the log-determinant in a more general setting, and a related result appears in work of Richardson in \cite{Richardson}.   The regularized trace was considered by Morpurgo in \cite{Morpurgo}, where a formula is given for the behavior of the regularized trace under a conformal change of metric, and he also considers its extremal behavior on the sphere.  Among metrics on the sphere, in \cite{Morpurgo}, Morpurgo showed that $\tilde{Z}(1)$ is minimized at the standard round metric by observing that the functional arising in the Polyakov-type formula given below in \eqref{e.changeofzeta} is the same functional that arises in a logarithmic-Hardy-Littlewood-Sobolev inequality proven by Beckner in \cite{Beckner} and Carlen and Loss in \cite{Carlenloss}.   There is a nice connection between the extremal behavior of the trace and the determinant on the sphere, since It turns out that the inequality which shows that the trace is minimized   is actually dual to the Onofri-Beckner inequality that gives  the maximization  of the log-determinant on the sphere.   Additionally, in \cite{steiner_duke},  Steiner shows that on the sphere, the point-wise varying behavior of the density for the regularized trace is  exactly captured by the potential for scalar curvature.    So, the story of the regularized trace  and its density on the sphere is reasonably well-understood, however much less is known for surfaces of genus $1$.   On the space of tori with flat-metrics, the zeta function satisfies a functional equation, and Chiu, in \cite{Chiu} showed that  the log-determinant and the regularized trace are linearly related, as described below in \eqref{e.tracedet}, so we can describe the behavior across conformal classes in terms of the behavior of the log-determinant.    In this paper, we work with rectangular `long-skinny' tori, for which the first non-zero eigenvalue will be suitably small, and we consider  the behavior of the regularized trace  within the conformal class of the flat metric.    
\begin{theorem}
For a  torus $(T^2, g_{T^2})$, with flat metric $g$  and first non-zero eigenvalue $\lambda_1<8\pi$ (hence `long-skinny'), among the metrics conformal to $g$, the regularized trace $\tilde{Z}(1)$ is not minimized at the constant curvature metric.  In fact, it is possible to exhibit a conformal change of metric, $g_\phi=e^{2\phi} g_{T^2}$, for which the value of the regularized trace approaches that of the standard round sphere, \ie $\tilde{Z}_{\phi}(1)\approx \tilde{Z}_{S^2}(1)$.
\end{theorem}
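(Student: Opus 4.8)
The plan is to construct an explicit conformal factor $\phi$ realizing a "bubbled torus" and show that the regularized trace of the resulting metric can be pushed arbitrarily close to that of the round sphere. The construction, as the abstract hints, is geometric: take a round sphere $S^2$, excise two small polar caps near the north and south poles, and glue the resulting boundary circles to each other. Because the torus is long and skinny ($\lambda_1 < 8\pi$, a ratio condition forcing the fundamental domain to be a long thin rectangle), there is "room" to place a conformal metric on the flat torus that looks, over most of its area, like a round sphere of the same total area, with the gluing region contributing only a small, controllable correction. So the first step is to pin down the correspondence between the skinny rectangular torus $\mathbb{R}^2/\Lambda$ and the sphere-with-caps-removed, and to write down the conformal factor $e^{2\phi}$ explicitly in terms of the longitude/latitude coordinates.

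The analytic engine is the Polyakov-type variational formula for how $\tilde{Z}(1)$ changes under a conformal change $g_\phi = e^{2\phi} g$, referenced in the excerpt as \eqref{e.changeofzeta}. The strategy is to feed the bubbling conformal factor into this formula and estimate each term. I expect the formula to decompose into a Dirichlet-energy-type term $\int |\nabla \phi|^2$, a curvature-coupling term involving $\int K_g \, \phi$ or $\int \phi \, e^{2\phi}$, and a term capturing the spectral density (the Green's-function mass) of the base metric. Over the bulk of the torus where the metric agrees with the round sphere, these contributions should reproduce $\tilde{Z}_{S^2}(1)$ almost exactly; the task is then to show the \emph{discrepancy} concentrated near the gluing seam is negligible. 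Since Chiu's linear relation \eqref{e.tracedet} ties $\tilde{Z}(1)$ to $\det(\Delta)$ on flat tori, one can also sanity-check the endpoint values against known determinant asymptotics for degenerating tori, which is presumably how the authors first noticed the phenomenon.

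The key quantitative steps, in order, are: (i) choose a one-parameter family of conformal factors $\phi = \phi_\epsilon$ that interpolate between a spherical cap metric on the bulk and a flat collar near the seam, with the cap approaching a full sphere as $\epsilon \to 0$; (ii) verify the area constraint, renormalizing so the total area is fixed (to compare traces at equal area, as in Theorem \ref{thm.ops}); (iii) substitute into \eqref{e.changeofzeta} and isolate the leading term as the round-sphere value $\tilde{Z}_{S^2}(1)$; and (iv) bound the seam/collar corrections, showing they vanish as $\epsilon \to 0$. Establishing that the trace is \emph{not} minimized at the flat metric then follows as a corollary: since the round-sphere trace is the known minimum over sphere metrics and the flat torus has strictly larger trace (indeed large, growing with the skinniness), any value near $\tilde{Z}_{S^2}(1)$ already beats the flat value, so the constant-curvature flat metric cannot be the conformal minimizer.

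The main obstacle I anticipate is controlling the Green's-function-mass / spectral-density contribution near the gluing seam, where the geometry transitions from spherical to the flat identification and the local curvature is singular or concentrated. Unlike the pure sphere case, the bubbled torus is not smooth at the seam in the naive construction, so I would either smooth the gluing over a collar of width $\delta \to 0$ and track the resulting curvature spike, or argue that the regularized trace is insensitive to such localized high-curvature regions of small area. Quantifying "approaches" ($\tilde{Z}_\phi(1) \approx \tilde{Z}_{S^2}(1)$) rigorously requires an explicit error estimate showing the collar contribution is $o(1)$ as the caps shrink; getting a clean bound here — rather than a mere heuristic that the seam is "small" — is where the real work lies, and it is likely facilitated by the $\lambda_1 < 8\pi$ hypothesis, which guarantees the torus is long enough to accommodate a near-complete sphere with an arbitrarily thin gluing neck.
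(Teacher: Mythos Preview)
Your high-level strategy matches the paper's: construct an explicit ``bubble'' conformal factor by pulling back the round metric via the standard conformal map from a cylinder to $S^2$, then feed it into Morpurgo's formula \eqref{e.changeofzeta}. Where your plan diverges is in the execution, and the divergence matters.

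First, the form of \eqref{e.changeofzeta} is not what you speculate: there is no Dirichlet energy term. The formula is
\[
\tilde{Z}_\phi(1)-\tilde{Z}(1)=\frac{1}{2\pi}\int \phi\, e^{2\phi}\,\dif V - \int \bigl(\Delta^{-1} e^{2\phi}\bigr)\, e^{2\phi}\,\dif V,
\]
so the central object to control is $\Delta^{-1} e^{2\phi}$, not $|\nabla\phi|^2$. The paper's key observation, which your proposal lacks, is that this potential can be written down \emph{in closed form}: since the pulled-back metric has Gaussian curvature $1$ on the interior, the curvature equation gives $\Delta\phi = e^{2\phi}$ pointwise, so $\Delta^{-1}e^{2\phi} = \phi + \tfrac{1}{8\pi a}x_1^2 + c$ (the quadratic fixes the $-1$ on the right-hand side of the Poisson equation, the constant fixes the mean). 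With this, every integral in \eqref{e.changeofzeta} is an explicit one-variable integral in $x_1$, computable exactly, and combined with Chiu's formula \eqref{e.regtraceineta} for the flat value one gets $\tilde{Z}_\phi(1) = \tilde{Z}_{S^2}(1) + O(1/a)$ by direct computation rather than by estimation.

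Second, your plan to build a smooth family with a flat collar and then bound seam corrections is what the paper sketches as an \emph{alternate} route (via Robin's mass comparison $m_{\tilde\phi}\approx m_{S^2}$ on the bubble and polynomial-times-exponentially-small control off it), but the main proof avoids this entirely: it uses the $C^{0,1}$ conformal factor $e^{2\phi} = \tfrac{a}{\tanh(a)}\,\mathrm{sech}^2(x_1)$ with no collar at all, and handles the lack of smoothness by a density argument (the functional $F[\phi]$ in \eqref{e.changeofzeta} is continuous on $C^0$, so smooth approximants have nearby trace). So the ``real work'' you anticipate at the seam is not where the paper spends its effort.

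Finally, your reading of the hypothesis $\lambda_1 < 8\pi$ is off. It does not enter the construction or facilitate the seam estimate; the proof is for rectangular tori parametrized by $a$, and the approach to $\tilde{Z}_{S^2}(1)$ occurs as $a\to\infty$. The threshold $8\pi$ is where the second-variation argument (showing the flat metric is a local minimum) breaks down, which is what makes the regime interesting, but it is not an ingredient in the bubble computation itself.
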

The conformal change of metric which provides the key to proving the theorem is obtained by `blowing a bubble'  in the torus.  The following figure gives a characature of the conformal factor which conformally maps  most of the torus to a sphere where the top and bottom caps have been removed and the top and bottom rims have been identified.  
\begin{figure}[h]\label{fig.torus}
\hspace{1.5in}\includegraphics[width=2in]{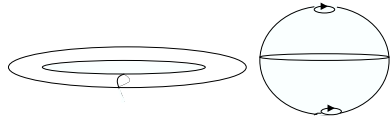}
\caption{Skinny torus has $\tilde{Z}_{T^2_a}(1)=O(a)$.  Bubbled torus has $\tilde{Z}(1)\approx \tilde{Z}_{S^2}(1) \approx -.189$ }
 \end{figure}

\paragraph{Organization of paper.}
In Section \ref{sec.background}, we define the regularized trace, and we give the precise connection between the regularized trace and the Green's function for the Laplacian.    We also describe how the regularized trace and its density change under conformal changes of metric.  Additionally, we give the relation between the regularized trace and the log-determinant for flat-tori, which characterizes the behavior across conformal classes.  In Section \ref{sec.longskinny}, we discuss the known local extremization of the regularized trace, which motivates our definition of long-skinny tori.  Finally, the main result of Theorem \ref{thm.blowbubble}  appears in Section \ref{sec.blowbubbles}, where we give an explicit conformal factor, and explain how the regularized trace was computed.  We also outline an alternate method of proof, which involves making estimates on Green's functions.

\section{Background}\label{sec.background}
 Throughout this paper, $(T^2,g_{T^2})$ will denote a surface of genus $1$, where $g_{T^2}$ is the flat metric with area $1$, and $T^2$ is obtained  by normalizing a quotient of  $\mathbb{R}^2$ by the lattice $\Lambda=<(1,0), \tau>$ for  some $\tau\in \mathbb{R}^2$.  Our normalization requires that the quotient has co-area $1$.    We will also be working with non-uniform metrics $g$ on $T^2$, and for any  
Riemannian metric tensor $g$, we use $\nabla$, $\Delta=-div\cdot \nabla$, and $\dif V$
to denote the  covariant derivative, the positive Laplacian operator,
and the volume form, respectively.  

\paragraph{The Green's function for the Laplacian} Since the regularized trace may be defined by way of the Green's function, it is an  object of central interest in this paper.  The  {\it Green's function} for the Laplacian operator is the integral kernel for the operator $\Delta^{-1}$.  
The Laplacian operator, $\Delta$,  is a  linear operator on the Sobolev space $H_2(M)\subset L^2(M,g)$,  and $\Delta$ has a one  dimensional null space, consisting of the constant functions.  Therefore, the inverse operator, $\Delta^{-1}$ is uniquely defined by the following convention:  
\begin{convention}\label{conv.laplinv}
The operator $\Delta^{-1}$ is the  linear operator on $L^2(M, g)$ with  a one-dimensional  null-space consisting of the constant functions:
\begin{equation}\label{e.constantsinnulllaplinv}
\Delta^{-1} 1=0
\end{equation}
Also, the image of $\Delta^{-1}$ is contained in the orthogonal complement of the constants in $L^2(M, g)$:
\begin{equation}
\int \Delta^{-1} f \dif V = 0 \hspace{1in} f\in L^2(M)
\end{equation}
 \end{convention}
Under the assumptions of Convention \ref{conv.laplinv}, the parametrics 
 for the Green's function may be given uniquely.  We review 
 some relevant properties  of the
Green's function in the following Proposition:
\begin{prop}[See, for example, Aubin \cite{Aubin}]
\label{propgreensfcn} On a closed, compact surface, $(M,g)$, let
$G(x,y)=K(\Delta^{-1},x,y)$ denote  the Green's function for the
positive Laplacian operator.
\begin{enumerate}
\item $G(x,y)$ is the unique integral kernel that is symmetric in $x$ and $y$, smooth for $d(x,y)>0$, integrates to $0$,
 and satisfies
\begin{equation}\label{e.greensfndistreqn}
\Delta^{distr}K(\Delta^{-1}, x,y) \stackrel{\dif V(y)}{=}
\delta_x(y)-\frac{1}{Vol(M)}
\end{equation}
\item If $d(x, y)<\epsilon$ for some small $\epsilon>0$, then there  is a continuous symmetric
 function,
 $H(x,y)\in C^0(M\times M)\cap C^{\infty}(M\times M\setminus\{(x,x)\})$, such that
\begin{equation}\label{parametricsgf}
G(x, y)=K(\Delta^{-1}, x,y) = \frac{-1}{2\pi}\log d(x,y)+H(x,y)
\end{equation}
\end{enumerate}
\end{prop}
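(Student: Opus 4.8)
The plan is to obtain existence, uniqueness, symmetry, and the distributional identity \eqref{e.greensfndistreqn} from Hodge theory together with elliptic regularity, and then to read off the logarithmic singularity \eqref{parametricsgf} by comparing $G$ near the diagonal with a local parametrix modeled on the Euclidean fundamental solution $-\frac{1}{2\pi}\log r$. Uniqueness comes first and is the cheapest ingredient: if $G_1$ and $G_2$ are two kernels satisfying all the listed conditions, then for fixed $x$ the difference $u = G_1(x,\cdot)-G_2(x,\cdot)$ is a distribution on $M$ for which the delta mass and the constant $1/\mathrm{Vol}(M)$ cancel, so $\Delta^{distr}u=0$. By Weyl's lemma $u$ is smooth, hence a harmonic function on the closed surface $M$, hence constant, and the normalization $\int_M u\,\dif V=0$ forces $u\equiv 0$; symmetry of each kernel is handled separately below.

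For existence I would fix $r_0$ below the injectivity radius, choose a smooth cutoff $\chi$ with $\chi\equiv 1$ on $[0,r_0/2]$ and $\mathrm{supp}\,\chi\subset[0,r_0]$, and set
\[
P(x,y) \;=\; -\frac{1}{2\pi}\,\chi\!\big(d(x,y)\big)\,\log d(x,y).
\]
Working in geodesic normal coordinates centered at $x$, where the metric is Euclidean to leading order, one checks that $\Delta_y^{distr}P(x,\cdot)=\delta_x+R(x,\cdot)$ with $R$ a smooth function on $M\times M$: the error picks up only bounded contributions from the non-flatness of $g$ and smooth contributions from derivatives falling on $\chi$, neither of which is singular. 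Integrating this identity over $M$ gives $\int_M R(x,y)\,\dif V(y)=-1$ (the integral of $\Delta_y^{distr}P$ vanishes and $\int\delta_x=1$), so $y\mapsto -R(x,y)-1/\mathrm{Vol}(M)$ has mean zero and we may put $w(x,\cdot):=\Delta_y^{-1}\big(-R(x,\cdot)-1/\mathrm{Vol}(M)\big)$, which is smooth by elliptic regularity. Then $G:=P+w$, adjusted by a function of $x$ alone so that $\int_M G(x,y)\,\dif V(y)=0$, satisfies \eqref{e.greensfndistreqn} and is smooth off the diagonal. Setting $H(x,y)=G(x,y)+\frac{1}{2\pi}\log d(x,y)$ on the region $d(x,y)<r_0$, one has $H=w$ wherever $\chi\equiv 1$, so $H$ extends continuously across the diagonal and is smooth off it, which is exactly \eqref{parametricsgf}.

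Symmetry can be obtained by pairing \eqref{e.greensfndistreqn} at the point $x$ against $G(y,\cdot)$ and vice versa and using self-adjointness of $\Delta$ on the orthogonal complement of the constants; the constant terms drop out because both kernels integrate to zero, leaving $G(x,y)=G(y,x)$. (Alternatively, one sidesteps uniqueness and symmetry altogether by building $G$ as the $L^2$-convergent spectral series $\sum_{j\ge 1}\lambda_j^{-1}\phi_j(x)\phi_j(y)$ in an orthonormal eigenbasis, which is manifestly symmetric and solves \eqref{e.greensfndistreqn}, and then uses the parametrix only to extract the singularity.) I expect the parametrix bookkeeping to be the main obstacle: one must verify carefully that $\Delta_y^{distr}P(x,\cdot)-\delta_x$ is a \emph{jointly smooth} function of $(x,y)$ — this rests on $-\frac{1}{2\pi}\log|z|$ being the Euclidean fundamental solution and on a controlled expansion of the metric in normal coordinates — and one must make the distributional symmetry argument rigorous. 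Everything else is routine elliptic theory.
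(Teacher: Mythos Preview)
The paper does not prove this proposition at all: it is quoted as standard background with a citation to Aubin, and the text moves directly on to the definition of the Robin mass. So there is no ``paper's own proof'' to compare against; your proposal is supplying what the authors deliberately outsourced.

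That said, your outline is the textbook parametrix argument and is correct. One small overstatement worth flagging: with the bare parametrix $P(x,y)=-\frac{1}{2\pi}\chi(d)\log d$, the remainder $R(x,\cdot)=\Delta_y^{distr}P(x,\cdot)-\delta_x$ is in general only \emph{bounded} near the diagonal, not jointly $C^\infty$; the curvature contributes a term of size $-\frac{K(x)}{6\pi}+O(d)$ whose higher corrections involve odd powers of $d$ and need not extend smoothly across $y=x$. This does not damage your argument, because the proposition only asks for $H\in C^0$ across the diagonal: once $R$ is bounded, $w=\Delta^{-1}(-R-1/\mathrm{Vol}(M))$ is already $C^{1,\alpha}$ by elliptic regularity, and $H=w+c(x)$ where $\chi\equiv 1$. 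Just replace ``smooth'' by ``bounded'' (or ``continuous'') in your description of $R$, or remark that an iterated parametrix is needed if one really wants a smooth remainder. Your uniqueness and symmetry arguments are fine; the symmetry pairing can be made rigorous by excising small geodesic balls and passing to the limit, exactly as in Aubin.
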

As a result of Proposition \ref{propgreensfcn}, the following
regularization of the Green's function is well-defined:
\begin{definition} \label{d.robinmass}On a closed, compact surface, $(M,g)$, the {\it Robin's
mass}\footnote{The terminology comes from potential theory, in
which the regularization of the Green's function for the Lapalcian
on a domain  in Euclidean space  has been studied extensively},
is denoted by $m (x)$ and is defined as:
\begin{equation}\label{e.defrobinmass}
m(x)=\lim_{y\rightarrow x} \left[ G(x,y)+\frac{1}{2\pi} \log
d(x,y) \right] \end{equation}
\end{definition}
\paragraph{Spectral Background}
The Green's function may also be considered in terms of its spectral representation, and we briefly review some of the relevant spectral background in order to connect the Robin's mass and the geometrical mass to an appropriate spectral invariant.

The eigenfunctions for the Laplacian, denoted by
$\{\phi_j\}_{j=0}^\infty$ are smooth functions that form an
orthonormal basis for $L^2(M)$.  The eigenvalues are bounded
below, with $0<\lambda_1\leq \lambda_2 \cdots$, and Weyl's law
describes the growth of the eigenvalues:
\begin{equation}\label{e.weylslaw}
\lim_{j\rightarrow \infty} \frac{\lambda_j}{j}=\frac{4\pi}{Vol(M)}
\end{equation}
Of course, the Green's function is the integral kernel for $\Delta^{-1}$, may be written in terms of the spectral data by way of the following sum (which converges in $L^2(M\times M)$, but does not converge pointwise for $x=y$, as indicated by the logarithmic singularity appearing in \eqref{parametricsgf}:
\begin{equation}\label{e.greensspectraldata}
\Delta^{-1} f (x)= \int_M G(x,y) f(y)\dif V(y), \text{ where }  G(x,y)=\sum_{\lambda_j\neq 0} \frac{1}{\lambda_j} \phi_j(x) \phi_j(y)
\end{equation}

  Now, in order to make  connections with the regularized trace precise, we introduce   the operator $\Delta^{-s}$ , which is  a linear operator on   $L^2(M, g)$.  A branch of the logarithm is fixed  so that $\lambda^s=e^{s\log \lambda }$, with $1^s=1$.  In order to be consistent with Convention \ref{conv.laplinv}, the operator $\Delta^{-s}$ is uniquely defined by the  following requirements: 
  \begin{equation}\label{def.deltas}
\Delta^{-s} \phi_j(x)=\lambda_j^{-s} \phi_j(x) \hspace{2pt} \text{ for }  j\geq 1; \hspace{10pt} \Delta^{-s} 1 = 0; \hspace{10pt} \int_M \Delta^{-s} f \dif V \hspace{2pt} \text{ for } f\in L^2(M, g) 
\end{equation}
From Weyl's law, we see that  $\Delta^{-s}$ is a Hilbert-Schmidt operator for $\Re s\geq 1$, and when $s=1$, the  integral kernel for $\Delta^{-1}$ is precisely the Green's function, $G(x,y)$.  The expression for the Green's function given in \eqref{e.greensspectraldata} combined with the asymptotics for the eigenvalues    reveals that  the operator $\Delta^{-1}$   is not trace-class.  However,  eigenvalue growth demonstrates that the operator  $\Delta^{-s}$   is trace-class as long as  $\Re s>1$.  The spectral Zeta function is defined to be the trace of $\Delta^{-s}$:
\begin{equation}\label{e.defzeta}
Z(s)\stackrel{def}{=}\mathrm{Trace}(\Delta^{-s})=\sum_{j=1}^{\infty} \lambda_j^{-s} \hspace{30pt} \Re s>1
\end{equation}
The zeta function has  a simple pole at $s=1$, and it is well known that the zeta function may be continued to a meromorphic function defined 
 on the complex plane. In
\cite{Morpurgo}, Morpurgo considers the following spectral
invariant, which is  a regularization of the trace of $\Delta^{-1}$:
\begin{definition} \label{d.regzeta} Assume $(M,g)$ is a closed compact surface with volume normalized to be $4\pi$, then the
{\it regularized Zeta function at 1} is defined as follows:
\begin{equation}\label{e.defregzeta}
\tilde{Z}_g(1)=\lim_{s\rightarrow 1}
\left(Z(s)-\frac{1}{s-1}\right)
\end{equation}
\end{definition}
The following proposition gives a precise connection between the Robin's mass and the regularized Zeta function, stating  that the Robin's mass is a density for the regularized Zeta function:
\begin{prop}\label{p.massisspecdensity} [Morpurgo \cite{Morpurgo_Duke}, S. \cite{steiner_duke}]
Assume $(M,g)$ is a closed compact surface normalized with volume $Vol(M)=4\pi$.  Then, the Robin's
mass $m(x)$ is a density for the spectral invariant
$\tilde{Z}(1)$, defined in Equation \ref{e.defregzeta} (here
$\gamma$ is the Euler gamma constant):
\begin{equation}\label{e.massisspecdensity}
\int_M  m (x) \dif V(x) -2\log 2+2\gamma = \tilde{Z}(1)
\end{equation}
\end{prop}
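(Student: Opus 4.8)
The plan is to pass everything through the heat semigroup $e^{-t\Delta}$ and to reduce the claimed identity to the evaluation of an elementary integral. Write $K_t(x,y):=K(e^{-t\Delta},x,y)=\sum_{j\ge0}e^{-t\lambda_j}\phi_j(x)\phi_j(y)$ for the heat kernel and $\Theta(t):=\mathrm{Tr}\,e^{-t\Delta}=\int_M K_t(x,x)\,dV$ for the heat trace. Since $\Delta^{-1}=\int_0^\infty(e^{-t\Delta}-P_0)\,dt$ with $P_0$ the $L^2$-projection onto the constants, Convention~\ref{conv.laplinv} identifies the Green's function as $G(x,y)=\int_0^\infty\big(K_t(x,y)-\tfrac{1}{\mathrm{Vol}(M)}\big)\,dt$ for $x\ne y$. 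In parallel, a one-line Frullani computation (substitute $u=1/4t$) gives, for every $r>0$, the elementary identity $-\tfrac1{2\pi}\log r=\int_0^\infty\big(\tfrac1{4\pi t}e^{-r^2/4t}-\tfrac1{4\pi t}e^{-1/4t}\big)\,dt$. Adding these two representations, using the on-diagonal short-time asymptotics $K_t(x,x)=\tfrac1{4\pi t}+O(1)$ as $t\to0^+$ to justify letting $y\to x$ under the time integral, and normalizing $\mathrm{Vol}(M)=4\pi$, one obtains the absolutely convergent formula
\begin{equation*}
m(x)=\int_0^\infty\Big(K_t(x,x)-\tfrac1{4\pi}-\tfrac1{4\pi t}+\tfrac1{4\pi t}e^{-1/4t}\Big)\,dt ,
\end{equation*}
and then, integrating over $M$ and applying Fubini,
\begin{equation*}
\int_M m(x)\,dV(x)=\int_0^\infty\Big(\Theta(t)-1-\tfrac1t+\tfrac1t e^{-1/4t}\Big)\,dt .
\end{equation*}

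Next I would express $\tilde Z(1)$ through the same heat trace. By the Mellin transform, $Z(s)=\tfrac1{\Gamma(s)}\int_0^\infty t^{s-1}(\Theta(t)-1)\,dt$ for $\Re s>1$. Splitting the integral at $t=1$ and inserting the short-time expansion $\Theta(t)=\tfrac1t+c+O(t)$ — where $c=\tfrac{\chi(M)}{6}-1$ by Gauss--Bonnet, though its exact value will cancel — isolates the pole: $\Gamma(s)Z(s)=\tfrac1{s-1}+\tfrac cs+H(s)$ with $H$ holomorphic near $s=1$ and $H(1)=\int_0^1\rho(t)\,dt+\int_1^\infty(\Theta(t)-1)\,dt$, where $\rho(t):=\Theta(t)-1-\tfrac1t-c$. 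Using $1/\Gamma(s)=1+\gamma(s-1)+O((s-1)^2)$ and taking the constant term at $s=1$ gives $\tilde Z(1)=\gamma+c+H(1)$.

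Finally I would compare. Splitting the Robin-mass integral at $t=1$ and using the same short-time expansion on $(0,1)$ gives $\int_M m\,dV=c+H(1)+\int_0^1\tfrac{e^{-1/4t}}{t}\,dt+\int_1^\infty\tfrac{e^{-1/4t}-1}{t}\,dt$, so $c$ and $H(1)$ cancel against the expression for $\tilde Z(1)$ and
\begin{equation*}
\int_M m\,dV-\tilde Z(1)=-\gamma+\int_0^1\frac{e^{-1/4t}}{t}\,dt+\int_1^\infty\frac{e^{-1/4t}-1}{t}\,dt .
\end{equation*}
The substitution $u=1/4t$ turns the last two integrals into $\int_{1/4}^\infty\tfrac{e^{-u}}{u}\,du+\int_0^{1/4}\tfrac{e^{-u}-1}{u}\,du$; since $\int_1^\infty\tfrac{e^{-u}}{u}\,du+\int_0^1\tfrac{e^{-u}-1}{u}\,du=-\gamma$ (a classical representation of Euler's constant) and the two sums differ by $\int_{1/4}^1\tfrac{du}{u}=2\log2$, this equals $2\log2-\gamma$. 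Hence $\int_M m\,dV-\tilde Z(1)=-\gamma+(2\log2-\gamma)=2\log2-2\gamma$, which is exactly the assertion.

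The part requiring genuine care — as opposed to bookkeeping with the heat expansion and the elementary integrals above — is the analytic hygiene: justifying the Green-via-heat representation under Convention~\ref{conv.laplinv} (convergence of the time integral and identification of the correct kernel), legitimating the interchange of $y\to x$ with $\int_0^\infty dt$ — where one needs uniform-in-$t$ control near the diagonal from the heat parametrix $K_t(x,y)\sim\tfrac1{4\pi t}e^{-d(x,y)^2/4t}u_0(x,y)$ with $u_0(x,x)=1$ — and the Fubini step $\int_M\int_0^\infty=\int_0^\infty\int_M$. A slightly more conceptual variant runs the same argument with $\Delta^{-s}$ in place of $e^{-t\Delta}$: show $K(\Delta^{-s},x,x)$ has a simple pole at $s=1$ with residue $\tfrac1{4\pi}$, deduce $\tilde Z(1)=\int_M\big[\lim_{s\to1}\big(K(\Delta^{-s},x,x)-\tfrac1{4\pi(s-1)}\big)\big]\,dV$, and check that this pointwise finite part and $m(x)$ differ by the constant $2\log2-2\gamma$; but the heat-kernel route keeps the bookkeeping lightest.
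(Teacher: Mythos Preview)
Your proof is correct. The paper does not actually prove this proposition: it is stated with attribution to Morpurgo \cite{Morpurgo_Duke} and Steiner \cite{steiner_duke} and used as background, so there is no in-paper argument to compare against. Your heat-kernel route---representing both $m(x)$ and $\tilde Z(1)$ through the heat trace via $G=\int_0^\infty(e^{-t\Delta}-P_0)\,dt$ and $\Gamma(s)Z(s)=\int_0^\infty t^{s-1}(\Theta(t)-1)\,dt$, and then reducing the difference to the elementary evaluation $\int_{1/4}^\infty\tfrac{e^{-u}}{u}\,du+\int_0^{1/4}\tfrac{e^{-u}-1}{u}\,du=2\log 2-\gamma$---is a standard and clean way to pin down the constant, and the bookkeeping checks out line by line. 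The analytic caveats you flag (parametrix control to pass $y\to x$ under the time integral, and the Fubini step) are exactly the right ones; on a closed surface they are handled by the Minakshisundaram--Pleijel expansion together with the exponential decay of $\Theta(t)-1$ for large $t$. The alternative $\Delta^{-s}$ variant you sketch at the end is in fact closer in spirit to how Morpurgo organizes things, but your heat-kernel version is equivalent and arguably more transparent for tracking the $2\log 2-2\gamma$.
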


\paragraph{Formula for behavior under conformal change of metric}
Since we will be interested in the behavior of the regularized trace under a conformal change of metric on a surface of genus $1$, we record the following formula, which is analogous to the Ray-Singer-Polyakov formula for the log-determinant.
\begin{theorem}[Morpurgo \cite{Morpurgo_Duke}, Steiner \cite{steiner_duke}]\label{thm.changeofzeta}  
If $\tilde{Z}(1)$ is the regularized trace for the constant curvature metric on a torus, with $\dif V$ and $\Delta^{-1}$ corresponding to the original metric $g$, then under conformal change $g_\phi = e^{2\phi} g$ fixing area to be $1$ on original metric and conformally changed metric.:
\begin{equation}\label{e.changeofzeta}
\tilde{Z}_\phi(1)-\tilde{Z}(1)=\frac{1}{2\pi}\int \phi(x) e^{2\phi} \dif V(x) -  \int \Delta^{-1} \etwophi
(x) e^{2\phi} \dif V(x)
\end{equation}
For any surface, the Robin's mass, $m(x)$ is the density for the regularized trace, and under the same conformal change of metric it satisfies the following formula:
\begin{equation}\label{e.changerobinmass}
m_{\!\!\:\phi}\!\!\;(x)=m(x)+ \frac{1}{2\pi}\phi(x)-2 \Delta^{-1} \etwophi
(x)+  \int_M \etwophi \Delta^{-1} \etwophi  \dif V
\end{equation}
\end{theorem}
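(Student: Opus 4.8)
The plan is to prove the pointwise formula \eqref{e.changerobinmass} for the Robin's mass first, and then obtain \eqref{e.changeofzeta} by integrating it against the new volume form and invoking Proposition \ref{p.massisspecdensity}. The starting point is to determine how the Green's function itself transforms. Since in two dimensions $\Delta_\phi = e^{-2\phi}\Delta$ and $\dif V_\phi = \etwophi\,\dif V$, testing $\Delta_\phi G_\phi$ against an arbitrary $\psi$ shows that, as a distribution paired with $\dif V$, one has $\Delta^{distr} G_\phi(\cdot,y) = \delta_y - \etwophi$ (using that both metrics have unit area), whereas $\Delta^{distr} G(\cdot,y) = \delta_y - 1$ by Proposition \ref{propgreensfcn}. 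Subtracting, $G_\phi(\cdot,y) - G(\cdot,y)$ solves a Poisson equation whose right-hand side $1 - \etwophi$ is independent of $y$, so $G_\phi(x,y) = G(x,y) - \Delta^{-1}\etwophi(x) + c(y)$. Symmetry of $G_\phi$ forces $c(y) = -\Delta^{-1}\etwophi(y) + C$ for a single constant $C$, and the normalization $\int G_\phi(x,\cdot)\,\dif V_\phi = 0$ from Convention \ref{conv.laplinv} pins down $C = \int \etwophi\,\Delta^{-1}\etwophi\,\dif V$.

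Next I would extract the Robin's mass. The only metric-dependent ingredient left in Definition \ref{d.robinmass} is the geodesic distance. Working in $g$-normal coordinates about $x$, the conformal factor is continuous, so a short $g_\phi$-geodesic has length $e^{\phi(x)}\,d(x,y)\,(1+o(1))$, yielding the local expansion $\log d_\phi(x,y) = \log d(x,y) + \phi(x) + o(1)$ as $y\to x$. Substituting the formula for $G_\phi$ together with this expansion into the limit defining $m_\phi(x)$, and using continuity of $\Delta^{-1}\etwophi$ so that $-\Delta^{-1}\etwophi(x)-\Delta^{-1}\etwophi(y)$ tends to $-2\Delta^{-1}\etwophi(x)$, the logarithmic singularities cancel and I recover exactly \eqref{e.changerobinmass}.

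Finally, to get \eqref{e.changeofzeta} I would integrate $m_\phi$ against $\dif V_\phi = \etwophi\,\dif V$ and apply Proposition \ref{p.massisspecdensity} to each metric; since the additive constant $-2\log 2 + 2\gamma$ is the same for both unit-area metrics, it cancels in the difference, leaving $\tilde{Z}_\phi(1) - \tilde{Z}(1) = \int m_\phi\,\etwophi\,\dif V - \int m\,\dif V$. Plugging in \eqref{e.changerobinmass} and using $\int \etwophi\,\dif V = 1$, the two $C$-terms combine as $-2C + C = -C = -\int \etwophi\,\Delta^{-1}\etwophi\,\dif V$. The one genuinely special feature of the torus enters precisely here: because the base metric is flat, hence homogeneous, its Robin's mass $m$ is constant, so $\int m(\etwophi - 1)\,\dif V = 0$ and the term $\int m\,\etwophi\,\dif V$ collapses to $\int m\,\dif V$ and cancels. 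What survives is exactly $\frac{1}{2\pi}\int \phi\,\etwophi\,\dif V - \int \Delta^{-1}\etwophi\,\etwophi\,\dif V$, as claimed.

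I expect the main obstacle to be the rigorous justification of the two local/distributional steps rather than the bookkeeping: namely, solving the distributional Poisson equation for $G_\phi - G$ and controlling the constant via the parametrix of Proposition \ref{propgreensfcn}, and establishing the distance asymptotic $\log d_\phi = \log d + \phi(x) + o(1)$ with enough uniformity to survive the limit $y\to x$. Tracking the volume normalization also requires care, since the density relation of Proposition \ref{p.massisspecdensity} is stated for volume $4\pi$ while the conformal change fixes area $1$; this discrepancy affects only the additive constant, which drops out of the difference.
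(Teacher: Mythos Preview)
The paper does not actually prove Theorem~\ref{thm.changeofzeta}; it is recorded in the Background section as a known result, with attribution to Morpurgo and Steiner, and is then used as a tool in Section~\ref{sec.blowbubbles}. So there is no in-paper proof to compare against.

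That said, your derivation is correct and is essentially the standard argument one finds in the cited sources. The key steps --- computing $\Delta G_\phi(\cdot,y)$ in the background metric via $\Delta_\phi=e^{-2\phi}\Delta$, solving for $G_\phi-G$ up to an additive function of $y$, using symmetry and the normalization from Convention~\ref{conv.laplinv} to pin down $C=\int e^{2\phi}\Delta^{-1}e^{2\phi}\,\dif V$, and then using $d_\phi(x,y)=e^{\phi(x)}d(x,y)(1+o(1))$ to read off the Robin's mass --- are all sound. Your observation that the passage from \eqref{e.changerobinmass} to \eqref{e.changeofzeta} genuinely relies on the flat torus, through the constancy of $m$, is exactly right and explains why the theorem singles out the torus for the first formula while stating the second for any surface. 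The caveats you flag (distributional bookkeeping, uniformity of the distance expansion, and the $4\pi$-versus-$1$ normalization in Proposition~\ref{p.massisspecdensity}) are the right places to be careful, but none of them hides a real obstruction.
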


\subsection{Connection between determinant and regularized trace}
In this paper, we will be concerned with the behavior of the regularized trace under conformal changes of metric, however, we should first make a few comments about the behavior of the regularized trace across conformal classes for flat tori.  The following theorem, which is given by Chiu in \cite{Chiu} states that the behavior of the regularized trace across conformal classes is linearly related to that of the log-determinant for the Laplacian.  
The theorem follows from the fact that the zeta function satisfies a  functional equation relating $Z(s)$ and $Z(1-s)$, so by considering the Laurent expansion of $Z(s)$ in the neighborhood of the pole at $1$, one can regularize and  finds that the residue at $1$ involves (the analytically continued) $Z'(0)$, which,  by definition, is  $\log\det(\Delta)=-Z'(0)$.
\begin{theorem}[Chiu, Theorem 2.3]  On a flat torus, $(T^2, g)$, the regularized trace for the Laplacian, $\tilde{Z}(1)$ defined in \eqref{e.defregzeta}, and the log-determinant for the Laplacian, $\log(\det(\Delta))=-Z'(0)$ are linearly related as follows. (Here $\gamma \approx .577216$ is the Euler Gamma constant) 
\begin{equation}\label{e.tracedet}
\tilde{Z}(1)=-\frac{\log(\det(\Delta)}{4\pi} -\frac{2\log 2}{2\pi}-\frac{1}{2\pi} \log \pi +\frac{\gamma}{2\pi}
\end{equation}
\end{theorem}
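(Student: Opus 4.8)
The plan is to deduce the identity entirely from the functional equation satisfied by the spectral zeta function of a flat torus, so the first step is to write that functional equation in a usable closed form. Let $\Lambda\subset\mathbb{R}^2$ be the (area‑$1$) lattice defining $T^2$ and $\Lambda^{*}$ its dual; the nonzero eigenvalues of $\Delta$ are $4\pi^2\|v\|^2$ for $v\in\Lambda^{*}\setminus\{0\}$. Writing $\Lambda^{*}=B\mathbb{Z}^2$ and $Q(m)=\|Bm\|^2=m^{T}(B^{T}B)m$, the binary form $Q$ is \emph{unimodular} ($\det Q=1$, since $\operatorname{area}(\Lambda^{*})=1$) and
\[
Z(s)=\sum_{m\neq 0}\bigl(4\pi^2 Q(m)\bigr)^{-s}=(4\pi^2)^{-s}E(Q,s),\qquad E(Q,s)=\sum_{m\neq 0}Q(m)^{-s}.
\]
Epstein's functional equation is $\pi^{-s}\Gamma(s)E(Q,s)=(\det Q)^{-1/2}\pi^{-(1-s)}\Gamma(1-s)E(Q^{-1},1-s)$; because $Q$ is a \emph{unimodular binary} form, $Q^{-1}$ is $GL_2(\mathbb{Z})$-equivalent to $Q$ (explicitly by $\left(\begin{smallmatrix}0&-1\\1&0\end{smallmatrix}\right)$), so $E(Q^{-1},\cdot)=E(Q,\cdot)$ and the equation \emph{closes up on the single torus}. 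Substituting $E(Q,s)=(4\pi^2)^{s}Z(s)$ gives the clean statement that $(4\pi)^{s}\Gamma(s)Z(s)$ is invariant under $s\mapsto 1-s$, i.e.
\[
Z(s)=(4\pi)^{1-2s}\,\frac{\Gamma(1-s)}{\Gamma(s)}\,Z(1-s).
\]
(Equivalently this comes from Jacobi/Poisson summation applied to $\Theta(t)=\sum_{v\in\Lambda^{*}}e^{-4\pi^2\|v\|^2 t}$ in the Mellin representation $\Gamma(s)Z(s)=\int_0^{\infty}t^{s-1}(\Theta(t)-1)\,dt$, splitting the integral at the fixed point of the involution.)

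Next I would just compare Laurent expansions of the two sides at $s=1$. Put $u=s-1$. On the right, $(4\pi)^{1-2s}=\frac{1}{4\pi}\bigl(1-2u\log(4\pi)+\cdots\bigr)$; $\Gamma(1-s)=\Gamma(-u)=-\frac{1}{u}-\gamma+O(u)$, and this simple pole is exactly what reproduces the known pole of $Z$ at $s=1$, with residue $(4\pi)^{-1}(-1)(-1)=\tfrac{1}{4\pi}$, consistent with Weyl's law; $1/\Gamma(s)=1/\Gamma(1+u)=1+\gamma u+O(u^2)$ since $\Gamma'(1)=-\gamma$; and $Z(1-s)=Z(-u)=Z(0)-Z'(0)u+O(u^2)$. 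For a flat torus the heat‑trace constant term vanishes ($\chi=0$, equivalently $K\equiv 0$), so $Z(0)=-1$, while $Z'(0)=-\log\det\Delta$ by definition and $Z(s)$ is holomorphic at $s=0$. Multiplying the four expansions and collecting the constant term yields
\[
Z(s)=\frac{1}{4\pi(s-1)}+\frac{1}{4\pi}\bigl(Z'(0)+2\gamma\bigr)-\frac{\log(4\pi)}{2\pi}+O(s-1),
\]
so, using $\log(4\pi)=2\log 2+\log\pi$ and $Z'(0)=-\log\det\Delta$,
\[
\tilde{Z}(1)=\lim_{s\to1}\Bigl(Z(s)-\frac{1}{4\pi(s-1)}\Bigr)=-\frac{\log\det\Delta}{4\pi}-\frac{2\log 2}{2\pi}-\frac{\log\pi}{2\pi}+\frac{\gamma}{2\pi},
\]
which is the claimed identity.

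Two points want care, and the first is the genuinely substantive one. The main obstacle is justifying that the functional equation truly closes on one torus rather than pairing a torus with its (generally non‑isometric) dual: the mechanism is the area normalization, which makes $Q$ unimodular and hence $GL_2(\mathbb{Z})$-equivalent to $Q^{-1}$; this is where "flat surface" and "$n=2$" are used essentially (the analogous statement fails for general lattices in higher dimension), and one can either exhibit the equivalence matrix or cite it from the classical Epstein/Chowla–Selberg literature (or from Chiu \cite{Chiu}). The second point is purely bookkeeping: the constants $2\log 2$, $\log\pi$, $\gamma$ are extracted only from the \emph{second‑order} terms — the $\gamma$ enters twice (from $\Gamma(z)\sim\frac1z-\gamma$ near $0$ and from $\Gamma'(1)=-\gamma$ near $1$) and the $\log(4\pi)$ enters solely through the interaction of the linear term of $(4\pi)^{1-2s}$ with the pole of $\Gamma(1-s)$ — so one must carry one extra order in every factor to get the coefficients right. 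Everything else (meromorphic continuation of $Z$, regularity at $s=0$, $Z(0)=-1$, $Z'(0)=-\log\det\Delta$) is standard heat‑kernel input that I would simply cite.
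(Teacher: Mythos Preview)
Your proposal is correct and follows exactly the route the paper indicates: the paper does not give a proof but attributes the result to Chiu and says in one sentence that it ``follows from the fact that the zeta function satisfies a functional equation relating $Z(s)$ and $Z(1-s)$, so by considering the Laurent expansion of $Z(s)$ in the neighborhood of the pole at $1$'' one picks up $Z'(0)$. You have supplied precisely that argument with full details, including the key point that the area-$1$ normalization makes $Q$ unimodular so the Epstein functional equation closes on a single torus. One small remark on bookkeeping: the paper's Definition~\ref{d.regzeta} is stated for volume $4\pi$ (residue $1$), while the torus sections use area $1$ (residue $1/4\pi$); you have, correctly, used the area-$1$ convention and subtracted $\tfrac{1}{4\pi(s-1)}$, which is what yields the stated constants.
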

Since the extremal behavior of $\log(\det(\Delta))$ is understood across conformal classes, we can now describe the behavior of the regularized trace across conformal classes.
\begin{cor}[Chiu \cite{Chiu}, Osgood-Phillips-Sarnak \cite{Ops}]
If  $z_\tau=x+i y$ to be the representative for the area $1$  lattice generated by normalizing the basis $< (1, 0), \tau>$, and $\eta(z)$ is the Dedekind Eta function, a modular form of weight $\frac{1}{2}$, then the regularized trace is given as follows:
\begin{equation}\label{e.regtraceineta}
\tilde{Z}(1)=\frac{1}{4\pi}(-\log((2\pi)^2 y |\eta(z)|^4))+\frac{2\log(2\pi)}{4\pi} -\frac{2\log 2}{2\pi}-\frac{1}{2\pi} \log \pi +\frac{\gamma}{2\pi}
\end{equation}
 Also, among all flat-tori, the regularized trace, $\tilde{Z}(1)$ is minimized at the hexagonal lattice, where  $\tilde{Z}(1) \approx -.2286$.  \\
\end{cor}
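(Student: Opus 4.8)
The plan is to deduce the corollary by combining Chiu's linear relation \eqref{e.tracedet} with the explicit formula for $\det(\Delta)$ on a flat torus in terms of the Dedekind $\eta$-function, together with the known minimization of $\det(\Delta)$ at the hexagonal lattice from Theorem \ref{thm.ops}. First I would recall the Ray–Singer evaluation of the determinant of the Laplacian on the flat torus $\mathbb{R}^2/\Lambda$, normalized to area $1$: writing $\tau = z_\tau = x+iy$ for the modulus of the (rescaled, area-$1$) lattice, one has
\begin{equation*}
\det(\Delta) = (2\pi)^2\, y\, |\eta(z_\tau)|^4 .
\end{equation*}
This is the standard closed form (it follows from the Kronecker limit formula applied to the Epstein zeta function $Z(s) = \sum_{(m,n)\neq 0} \lambda_{m,n}^{-s}$, differentiating the analytic continuation at $s=0$); I would cite it rather than rederive it. Substituting $-\log\det(\Delta) = -\log\!\big((2\pi)^2 y |\eta(z_\tau)|^4\big)$ into \eqref{e.tracedet} and bookkeeping the constant terms ($-\frac{1}{4\pi}\log\det(\Delta)$ produces the leading term, and the $\frac{2\log(2\pi)}{4\pi}$ in \eqref{e.regtraceineta} is exactly $-\frac{1}{4\pi}$ times the $-2\log(2\pi)$ hidden inside, i.e.\ it splits off the $(2\pi)^2$ factor) yields \eqref{e.regtraceineta} directly.

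For the minimization statement, the cleanest route is to invoke Theorem \ref{thm.ops}: among all flat tori of fixed area, $\det(\Delta)$ is \emph{maximized} at the hexagonal lattice. Since \eqref{e.tracedet} shows $\tilde Z(1)$ is an affine function of $-\log\det(\Delta)$ with \emph{positive} coefficient $\frac{1}{4\pi}$ on $-\log\det(\Delta)$ — equivalently, $\tilde Z(1)$ is a strictly decreasing function of $\log\det(\Delta)$ — maximizing $\det(\Delta)$ is the same as minimizing $\tilde Z(1)$. Hence $\tilde Z(1)$ attains its minimum over flat tori precisely at the hexagonal lattice $\Lambda = \langle(1,0),(\tfrac12,\tfrac{\sqrt3}{2})\rangle$. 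Finally I would compute the numerical value: at the hexagonal point $z_\tau = e^{i\pi/3} = \tfrac12 + i\tfrac{\sqrt3}{2}$, so $y = \tfrac{\sqrt3}{2}$, and $|\eta(e^{i\pi/3})|$ has the known closed form $|\eta(e^{i\pi/3})| = e^{\pi\sqrt3/24}\,\dfrac{\Gamma(1/3)^{3/2}}{2\pi\,\cdot 3^{1/8}}$ (or one simply evaluates the $q$-product numerically with $q = e^{2\pi i z_\tau}$, $|q| = e^{-\pi\sqrt3}$); plugging into \eqref{e.regtraceineta} gives $\tilde Z(1)\approx -.2286$.

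The only genuine obstacle is the constant-chasing: one must be scrupulous about which normalization of area is in force (the flat metric here has area $1$, whereas Definition \ref{d.regzeta} and Proposition \ref{p.massisspecdensity} are stated for area $4\pi$), and about the modular weight of $\eta$ so that $y|\eta(z_\tau)|^4$ is genuinely $\mathrm{SL}_2(\mathbb{Z})$-invariant (this is why the combination $y|\eta|^4$, not $\eta$ alone, appears). Everything else — the $\eta$-formula for $\det\Delta$, the maximality of the hexagonal lattice, the monotonicity argument, and the numerics — is either quoted or routine. I would therefore organize the writeup as: (i) state the $\eta$-formula for $\det\Delta$ with a reference; (ii) substitute into \eqref{e.tracedet} and simplify to get \eqref{e.regtraceineta}; (iii) observe the affine relation is order-reversing and invoke Theorem \ref{thm.ops}; (iv) evaluate at the hexagonal point.
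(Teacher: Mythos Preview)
Your proposal is correct and matches the paper's approach: the paper does not write out a proof but presents the corollary as the immediate combination of Chiu's relation \eqref{e.tracedet}, the classical Ray--Singer/Kronecker formula $\det(\Delta)=(2\pi)^2 y|\eta(z_\tau)|^4$, and Theorem~\ref{thm.ops}, which is exactly your route. Your caveat about the area normalization ($1$ versus $4\pi$) is worth flagging but does not affect the argument.
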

By analyzing the asymptotic behavior of the Dedekind Eta function, one finds:
\begin{cor}
If the lattice corresponds to a rectangle with $\tau=\frac{a}{\pi}$, then for large $a$, the leading behavior of the regularized trace is $\tilde{Z}(1)\sim \frac{1}{12\pi} a$.  
\end{cor}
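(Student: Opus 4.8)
The plan is to extract the leading term directly from the closed form \eqref{e.regtraceineta} for $\tilde{Z}(1)$, treating the whole problem as an asymptotic analysis of the Dedekind eta function near the cusp. First I would pin down the modular parameter: a rectangular torus with $\tau = a/\pi$ corresponds to $z_\tau = i\,a/\pi$ in the upper half-plane, so $y = \operatorname{Im} z_\tau = a/\pi$; normalizing the covolume to $1$ rescales the lattice but leaves $z_\tau$ unchanged. Hence sending $a \to \infty$ is the same as approaching the cusp $y \to \infty$, which is precisely the ``long-skinny'' regime: there $L_1 = \sqrt{\pi/a}$, $L_2 = \sqrt{a/\pi}$ and $\lambda_1 = 4\pi^3/a \to 0$, so the hypothesis $\lambda_1 < 8\pi$ of the main theorem holds for all large $a$.

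Next I would use the product expansion $\eta(z) = e^{\pi i z/12}\prod_{n\ge 1}\bigl(1 - e^{2\pi i n z}\bigr)$. On the imaginary axis $z = iy$ one has $e^{2\pi i n z} = e^{-2\pi n y}$, so the infinite product equals $1 + O(e^{-2\pi y})$ as $y \to \infty$; therefore $|\eta(z_\tau)|^4 = e^{-\pi y/3}\bigl(1 + O(e^{-2\pi y})\bigr)$, and consequently $-\log\bigl((2\pi)^2 y\,|\eta(z_\tau)|^4\bigr) = \tfrac{\pi}{3}\,y - \log y + O(1)$.

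Substituting into \eqref{e.regtraceineta}, the first term contributes $\tfrac{1}{4\pi}\cdot\tfrac{\pi}{3}\,y = \tfrac{y}{12} = \tfrac{a}{12\pi}$, while every remaining term is $O(\log a)$; hence $\tilde{Z}(1) = \tfrac{a}{12\pi} + O(\log a)$, which is the asserted leading behavior (and this is the source of the ``skinny torus has large trace'' phenomenon, in contrast to the bubbled tori of the introduction). I do not expect a genuine obstacle here; the only care needed is bookkeeping --- confirming that the stated normalization puts $\operatorname{Im} z_\tau = a/\pi$ rather than its reciprocal, and tracking the $|\eta|^4$ versus $|\eta|^2$ and the $\tfrac{1}{4\pi}$ versus $\tfrac{1}{2\pi}$ factors correctly through \eqref{e.regtraceineta}. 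As a sanity check, combining this computation with the linear relation \eqref{e.tracedet} gives $-\log\det\Delta = -\log\bigl(y\,|\eta(z_\tau)|^4\bigr) \sim \tfrac{a}{3}$, the classical asymptotics of the determinant on a long rectangular torus.
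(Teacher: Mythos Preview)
Your proposal is correct and follows exactly the approach the paper indicates: the paper's own justification is only the one-line remark ``By analyzing the asymptotic behavior of the Dedekind Eta function, one finds'' preceding the corollary, and your argument supplies precisely that analysis --- reading off $y=a/\pi$, expanding $|\eta(iy)|^4=e^{-\pi y/3}(1+O(e^{-2\pi y}))$ from the product formula, and substituting into \eqref{e.regtraceineta} to get the $\tfrac{a}{12\pi}$ leading term. Your bookkeeping (the identification $L_1=\sqrt{\pi/a}$, $L_2=\sqrt{a/\pi}$, and the cross-check against \eqref{e.tracedet}) matches the paper's conventions as used later in Theorem~\ref{thm.blowbubble}.
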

\begin{rmk}
 In \cite{hideandseek}, Doyle and the author give a probabilistic interpretation of the regularized trace as the ($\epsilon$-regularized) expected duration of a hide and seek game.  In the hide and seek game, a  seeker begins  at fixed point $x\in T^2$, and reach an $\epsilon$-neighborhood of someone who has hidden at a randomly chosen location on $T^2$; the expected duration of this game is essentially $\tilde{Z}(1)+\frac{1}{2\pi}\log\epsilon$.  The probabilistic interpretation gives the physical intuition for why, as  $a$ becomes large and the torus becomes longer and skinnier, the regularized trace should be growing: As the torus becomes longer, there will be more area (and therefore more places where the hider will need to be found) which is farther away from a fixed starting point.  \end{rmk}

 \section{Fat tori and long-skinny tori}\label{sec.longskinny}
 In this section we motivate the distinction between fat and long-skinny tori, which are defined in Definition \ref{d.longskinny}
 \begin{prop}[Morpurgo, \cite{Morpurgo}]  If $(T^2, g)$ is a torus with area $1$ and with the smallest eigenvalue satisfying $\lambda_1>8\pi$, then the the constant curvature metric is a local minimum for the regularized zeta function, $\tilde{Z}(1)$  within the class of conformally equivalent metrics with area $1$.  
 \end{prop}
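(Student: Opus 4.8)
The plan is to use the second-variation criterion for $\tilde Z_\phi(1)$ at the constant curvature (flat) metric, reading off the Hessian directly from the Polyakov-type formula~\eqref{e.changeofzeta}. Write $\phi = t\,u$ with $u$ a smooth function on $T^2$ normalized so that the area constraint $\int e^{2\phi}\,\dif V = 1$ is respected to the appropriate order (i.e. $\int u\,\dif V = 0$ at first order, with a compensating term at order $t^2$). Substituting into~\eqref{e.changeofzeta} and expanding in $t$, the linear term vanishes because the flat metric is a critical point, and the quadratic term reduces — after using $\Delta^{-1}1 = 0$ and the area normalization — to an expression of the shape
\begin{equation*}
\tilde Z_\phi(1) - \tilde Z(1) = \frac{t^2}{2\pi}\Big( \int u^2 \,\dif V - 4\pi \int u\, \Delta^{-1} u \,\dif V\Big) + O(t^3).
\end{equation*}
First I would carry out this expansion carefully, tracking how the area constraint feeds a multiple of $\int u\,\dif V$ (which is zero) into the bookkeeping so that no spurious first-order term survives.

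Next I would diagonalize the quadratic form using the eigenbasis $\{\phi_j\}$ of $\Delta$. Writing $u = \sum_{j\ge 1} c_j \phi_j$ (no constant term, by the normalization), we have $\int u^2\,\dif V = \sum c_j^2$ and $\int u\,\Delta^{-1}u\,\dif V = \sum c_j^2/\lambda_j$, so the Hessian acts on the $j$-th mode as the scalar
\begin{equation*}
\frac{1}{2\pi}\Big(1 - \frac{4\pi}{\lambda_j}\Big) = \frac{1}{2\pi}\cdot\frac{\lambda_j - 4\pi}{\lambda_j}.
\end{equation*}
Hmm — this gives $8\pi$ only if there is an extra factor of $2$ somewhere, so in the actual write-up I would need to recheck the constant in the cross term of~\eqref{e.changeofzeta}; the point is that the sign of the $j$-th Hessian eigenvalue is governed by whether $\lambda_j$ exceeds the threshold appearing in Definition~\ref{d.longskinny}, and the hypothesis $\lambda_1 > 8\pi$ is exactly the statement that \emph{every} eigenvalue clears the threshold. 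Hence the Hessian is positive definite on the space of mean-zero perturbations, and since the flat metric is already a critical point of $\tilde Z(1)$ within the fixed-area conformal class, it is a strict local minimum.

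The main obstacle is not conceptual but bookkeeping: getting the area constraint handled correctly to second order, and pinning down the precise numerical constant (and hence the exact eigenvalue threshold) in the quadratic form, since an error of a factor of $2$ changes $8\pi$ to $4\pi$. A secondary point to be careful about is that~\eqref{e.changeofzeta} as stated presumes the starting metric is the flat one, so the "critical point" claim must be justified — either by noting that translation invariance of the flat torus forces the first variation to vanish, or by directly computing that the linear term in the $t$-expansion is $\frac{1}{2\pi}\int u\,\dif V$ plus the area-constraint correction, which cancels. Once those two points are nailed down, positivity of $(\lambda_j - 8\pi)/\lambda_j$ for all $j\ge 1$ under the hypothesis finishes the argument immediately, and the contrapositive perspective — that when $\lambda_1$ drops below the threshold the first mode becomes a descent direction — is exactly what motivates the long-skinny regime studied in the rest of the paper.
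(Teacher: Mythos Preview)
Your approach---second variation of~\eqref{e.changeofzeta} followed by the spectral bound $\int u\,\Delta^{-1}u\le\lambda_1^{-1}\int u^2$---is exactly the paper's. The paper resolves your factor-of-$2$ worry by parametrizing the variation as $e^{2\phi}=1+\lambda\psi$ (with $\int\psi\,\dif V=0$) rather than $\phi=tu$: this makes the area constraint hold \emph{exactly} with no second-order correction, and the second derivative at $\lambda=0$ drops out as $\tfrac{1}{4\pi}\int\psi^2-2\int\psi\,\Delta^{-1}\psi$, yielding the $8\pi$ threshold directly.
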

\begin{proof}
 The proposition follows from a simple variational argument.    If we write the conformal factor as $e^{2\phi}=1+\lambda \psi$, with $\int_{T^2} \psi \dif V=0$ in order to keep the volume fixed to be $1$, then differentiating  \eqref{e.changeofzeta}, yields 
 the first variation for the regularized zeta function for the conformally equivalent metric, $\tilde{Z}_{\lambda}(1)$:
 \be
 \frac{\partial}{\partial \lambda}\tilde{Z}_{\lambda}(1)=\frac{1}{4\pi} \int_{T^2} (\psi+\psi \log(1+\lambda \psi))\dif V -\int_{T^2} 2 (1+\lambda \psi) \Delta^{-1} \psi \dif V
 \ee
 When $\lambda=0$, we have a critical point since $\psi$ integrates to $0$, and we the operator range of the operator $\Delta^{-1}$  is orthogonal to the constant functions, so 
 \be
 \frac{\partial}{\partial \lambda}|_{\lambda=0}\tilde{Z}_{\lambda}(1)=\frac{1}{4\pi} \int \psi \dif V+\int \Delta^{-1} \psi \dif V=0.
 \ee
 In order to understand the behavior at the critical point, we must consider the second variation:
 \be
   \frac{\partial^2}{\partial \lambda^2 }|_{\lambda=0}\tilde{Z}_{\lambda}(1)=\left[ \frac{1}{4\pi} \int_{T^2} \frac{\psi \cdot \psi}{ (1+\lambda \psi)}\dif V -\int_{T^2} 2  \psi) \Delta^{-1} \psi \dif V\right]_{\lambda=0}=\frac{1}{4\pi} \int_{T^2} \psi^2 \dif V -\int_{T^2} 2  \psi \Delta^{-1} \psi \dif V
 \ee
 Now, since $\psi$ is orthogonal to the null-space of $\Delta^{-1}$, and the non-zero eigenvalues for $\Delta^{-1}$ satisfy $\frac{1}{\lambda_j}\leq \frac{1}{\lambda_1}$, we have $\int \psi \Delta^{-1} \psi \dif V\leq \frac{1}{\lambda_1} \int \psi^2 \dif V$, so if $\lambda_1> 8\pi$, we see that the critical point is a minimum, since the second derivative is positive:
 \be
   \frac{\partial^2}{\partial \lambda^2 }|_{\lambda=0}\tilde{Z}_{\lambda}(1)=\frac{1}{4\pi} \int_{T^2} \psi^2 \dif V -\int_{T^2} 2  \psi \Delta^{-1} \psi \dif V\geq (\frac{1}{4\pi} -\frac{2}{\lambda_1} )\int_{T^2} \psi^2 \dif V\geq 0.
   \ee
   \end{proof}
   
At present, it is not known whether the minimum at the constant curvature metrics is actually a global minimum among all conformal metrics, however, we are interested in the case where $\lambda_1\leq 8\pi$, in which  a simple variational consideration of the formula for the change of the regularized trace under a conformal change does not yield any obvious behavior.   We introduce a more suggestive description of the types of tori in which we are interested.

 \begin{definition}\label{d.longskinny}  A torus $(T^2, g_{T^2})$, with flat metric $g_{T^2}$ is called a  {\it long-skinny torus}  if $\lambda_1$, the lowest non-zero eigenvalue for the Laplacian satisfies $\lambda_1>8\pi$.
 \end{definition}
 \begin{rmk}  The terminology `long-skinny' is readily apparent when one considers a rectangular torus.  In a rectangular lattice, since the volume of the torus must be fixed at $1$, the lattice will consist of a vector of length $w$ and another vector of length $\frac{1}{w}$.  The vectors and corresponding co-vectors are related to each other through the action of the element in $SL(2, \mathbb{Z})$ corresponding to the mapping $S: z\rightarrow \frac{-1}{z}$, and so the co-vectors will have lengths $w$ and $\frac{1}{w}$ as well.  Now,  $\lambda_1$ is the square  of the length of the shortest vector in the dual lattice,  so we may assume $w^2=\lambda_1$, and consequently if $\lambda_1>8\pi$, then $w>\sqrt{8\pi} $, and $\frac{1}{w}<\sqrt{8\pi}$, so there will be a `long' primitive closed geodesic   in the direction of the vector of length $w>\sqrt{8\pi}$, and a `skinny' primitive closed geodesic  in the direction of the vector with length $\frac{1}{w}<\sqrt{8\pi}$.
  \end{rmk}
  \begin{rmk}  Of course, the opposite of a long-skinny torus would be a fat torus, which has $\lambda_1>8\pi$, and from the previous proposition, we know that there is a local minimum for the regularized trace at the flat metric for fat tori.  .  The square torus and the torus corresponding to the hexagonal lattice are two well-known examples of fat tori..  \end{rmk}

 \section{Blowing bubbles in a skinny torus}\label{sec.blowbubbles}
 The results that we discuss here follow from a particular family of conformal factors on the flat torus, so we begin by describing the conformal factors.   The  picture in Figure \ref{fig.torus} of the Introduction  captures the basic behavior of the $C^{0,1}$ conformal factor,  which is obtained by mapping the torus to most of the  sphere. 
 
  \begin{theorem}\label{thm.blowbubble}  For a long-skinny rectangular torus of dimension $\sqrt{\frac{\pi}{a}}\times \sqrt{\frac{a}{\pi}}$ with flat metric, $g_{T^2}$, and area $1$, there exists a smooth conformally equivalent metric   $g_\phi=e^{2\phi}g$ and a corresponding regularized trace $\tilde{Z}_{\phi}(1)$ which approaches the value of the regularized trace on the standard sphere with area $1$, $\tilde{Z}_{S^2}(1)=\frac{1}{4\pi} (2\gamma-1-\log (4\pi))\approx -.189$    
  \be
  \tilde{Z}_{\phi} (1)=\tilde{Z}_{S^2}(1)+O(\frac{1}{a})
  \ee
  The conformal change reduces the regularized trace corresponding to the original flat metric, which for large $a$ has  the asymptotic behavior  $\tilde{Z}_{T^2}(1)=\frac{a}{12\pi}+o(a)$.  
 \end{theorem}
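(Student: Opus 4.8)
The plan is to write down an explicit conformal factor $\phi$ on the rectangular torus $T^2_a$ of dimensions $\sqrt{\pi/a}\times\sqrt{a/\pi}$ that "blows a bubble": cut the round sphere $S^2$ of area $1$ along two small polar caps, pull back the remaining spherical annulus to the torus via a conformal map, and extend $\phi$ across the two removed caps in a controlled $C^{0,1}$ (then smoothed) way. Concretely, on the long direction (parametrized, say, by $x\in\mathbb{R}/\sqrt{a/\pi}\,\mathbb{Z}$) and the short direction ($y\in\mathbb{R}/\sqrt{\pi/a}\,\mathbb{Z}$), the stereographic-type map sending the cylinder to $S^2$ minus its poles has conformal factor $e^{2\phi}=\big(\tfrac{2}{1+|w|^2}\big)^2$ after an appropriate complex coordinate $w=e^{c(x+iy)}$; the parameter $a$ controls how close the image gets to covering the whole sphere, so the "missing caps" have spherical area $O(1/a)$. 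I would first verify that with area normalized to $1$ the metric $g_\phi$ is genuinely conformal to the flat $g_{T^2}$ away from the two circles where the caps are glued in, and then fix up $\phi$ on tiny collars so that it is smooth and the total area stays $1$; the correction to $\phi$ is supported on sets of flat area $O(1/a)$ and is bounded there, which will make all resulting error terms $O(1/a)$.

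The heart of the computation is then to evaluate the Morpurgo–Steiner formula \eqref{e.changeofzeta},
\be
\tilde{Z}_\phi(1)-\tilde{Z}(1)=\frac{1}{2\pi}\int \phi\, e^{2\phi}\,\dif V - \int \Delta^{-1}\etwophi\,(x)\,e^{2\phi}\,\dif V(x),
\ee
and to show that the right-hand side equals $\tilde{Z}_{S^2}(1)-\tilde{Z}_{T^2}(1)+O(1/a)$. The key observation is that this functional is conformally natural: if $\phi$ were \emph{exactly} the conformal factor identifying $(T^2,g_\phi)$ with $(S^2,g_{round})$, then by the cocycle property of the trace anomaly the right-hand side would be exactly $\tilde{Z}_{S^2}(1)-\tilde{Z}_{T^2}(1)$ — except that no such exact $\phi$ exists because the torus and sphere are not conformally equivalent. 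So instead I would split the integrals over $T^2$ into the "bubble" region $B$ (the image of the spherical annulus, where $g_\phi$ agrees with a genuine spherical metric) and the "cap" region $T^2\setminus B$ of flat area $O(1/a)$. On $B$, $\phi$, $e^{2\phi}$, and the geometry match the sphere minus two caps, so those contributions reproduce the corresponding integrals on $S^2$ up to the missing-cap pieces, which are $O(1/a)$ because both the spherical area element and $\phi$ are bounded there (the caps are near the poles where $e^{2\phi}$ is small). On $T^2\setminus B$, $e^{2\phi}\,\dif V$ integrates to $O(1/a)$ and $\phi$ is bounded, so $\frac{1}{2\pi}\int_{T^2\setminus B}\phi e^{2\phi}\dif V=O(1/a)$ directly.

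The main obstacle is the nonlocal term $\int \Delta^{-1}\etwophi\, e^{2\phi}\,\dif V$: the operator $\Delta^{-1}$ here is the torus Green's operator, not the spherical one, so one cannot simply restrict to $B$. The plan is to compare $\Delta_{T^2}^{-1}\etwophi$ with the spherical quantity $\Delta_{S^2}^{-1} 1_{round}$ (a constant, hence contributing nothing) by noting that $\etwophi\,\dif V_{T^2}$ is, on $B$, exactly the round volume form, so $\etwophi$ solves on $B$ the same Poisson-type problem that the constant solves on the sphere; the discrepancy $u=\Delta_{T^2}^{-1}\etwophi - (\text{const})$ satisfies $\Delta_{T^2} u = \etwophi - 1$, whose right-hand side is supported in $T^2\setminus B$ and has $L^1$-norm $O(1/a)$. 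Standard elliptic/Green's-function estimates on the fixed torus then give $\|u\|_{L^1(e^{2\phi}\dif V)}=O(1/a\cdot\log a)$ or better; pairing against $e^{2\phi}$ (total mass $1$) shows this term also contributes only a lower-order error, which I expect to absorb into $O(1/a)$ after being slightly more careful with the logarithm, perhaps by using that the forcing is concentrated where the Green's function is evaluated against a bounded density. Assembling the pieces, $\tilde{Z}_\phi(1)-\tilde{Z}_{T^2}(1)=\big(\tilde{Z}_{S^2}(1)-\tilde{Z}_{T^2}(1)\big)+O(1/a)$, i.e. $\tilde{Z}_\phi(1)=\tilde{Z}_{S^2}(1)+O(1/a)$, and the asymptotic $\tilde{Z}_{T^2}(1)=\tfrac{a}{12\pi}+o(a)$ follows from the earlier corollary on the Dedekind eta expansion with $\tau=a/\pi$.
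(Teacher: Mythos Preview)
Your overall architecture---a longitudinal conformal factor pulling back the round metric via the cylinder-to-sphere map, then Morpurgo's formula \eqref{e.changeofzeta}, then the eta-function asymptotics for $\tilde{Z}_{T^2}(1)$---is exactly the paper's. The gap is in your handling of the nonlocal term $\int (\Delta_{T^2}^{-1} e^{2\phi})\, e^{2\phi}\, \dif V$. You assert that $u = \Delta_{T^2}^{-1} e^{2\phi} - \text{const}$ solves $\Delta_{T^2} u = e^{2\phi} - 1$ with right-hand side ``supported in $T^2\setminus B$'' and of $L^1$-norm $O(1/a)$. The equation is right, but the localization claim is not: on the bubble $B$ the \emph{function} $e^{2\phi}$ (in the paper's coordinates, $a/(\tanh a\,\cosh^2 x_1)$) ranges from roughly $a$ at the equator down to roughly $4a e^{-2a}$ at the rims, so $e^{2\phi}-1$ is neither small nor concentrated near the caps, and $\|e^{2\phi}-1\|_{L^1(\dif V_{T^2})}$ is of order $1$, not $O(1/a)$. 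What \emph{is} true is that the \emph{measure} $e^{2\phi}\,\dif V_{T^2}$ coincides with the round area form on $B$; but $\Delta_{T^2}^{-1}$ acts on functions relative to the flat metric, and as a function $e^{2\phi}$ is wildly nonconstant. In fact $\Delta_{T^2}^{-1} e^{2\phi}$ has variation of order $a$, so your comparison to a constant cannot close and the subsequent $L^1$ estimate on $u$ fails.

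The missing idea is the Gauss curvature equation. Since $g_\phi$ has constant positive curvature on the interior of the fundamental domain and $g_{T^2}$ is flat, the curvature equation gives $\Delta_{T^2}\phi = (\text{const})\cdot e^{2\phi}$ there. Hence $\Delta_{T^2}^{-1} e^{2\phi}$ equals a constant multiple of $\phi$ plus a function $f$ with $\Delta_{T^2} f = -1$; on a flat torus with purely longitudinal dependence this forces $f$ to be an explicit quadratic in $x_1$ plus a constant fixed by the zero-mean condition. Thus $\Delta_{T^2}^{-1} e^{2\phi}$ is known in closed form, $F[\phi]$ reduces to elementary one-variable integrals, and the asymptotic $\tilde{Z}_\phi(1)=\tilde{Z}_{S^2}(1)+O(1/a)$ falls out (the paper uses Mathematica for bookkeeping). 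The paper's sketched alternate route via Robin's mass is closer in spirit to your comparison idea, but even there one compares the Green's \emph{functions} $G_\phi$ and $G_{S^2}$ directly, not the forcing terms in a Poisson equation.
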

 \begin{rmk}
 Note that our result only holds for long-skinny tori.  For the torus corresponding to the hexagonal lattice, which is the global minimizer for the regularized trace among all flat tori, we have $\tilde{Z}_{hex}(1)\approx -.2286$, which is  smaller than that of the standard sphere with volume $1$, which is $\tilde{Z}(1)=\frac{1}{4\pi} (2\gamma-1-\log (4\pi))\approx -.189$.  
 \end{rmk}
 \begin{proof}
 Our method of proof is unabashedly constructive:  we exhibit an explicit conformal factor, and since the conformal factor is only $C^{0,1}$, we must show that one can find a smooth conformal factor for which the regularized zeta function will be as close as we like to our computed value.  Work is underway to give a more general argument that does not require the construction of an explicit conformal factor.   We now describe the conformal factor,  illustrated in Figure \ref{fig.torus}. The conformal factor  'blows a bubble' in the torus, because it is obtained from  mapping the torus to a sphere, and therefore making it behave like a spherical bubble.
 
 \paragraph{The  $C^{0,1}$ conformal factor}
 In order to make it slightly easier to state the mapping from the torus to the cap-less sphere, we work with a parametrization for the torus where we take $T^2=\{(x_1,x_2)\in [-a, a]\times [0, 2\pi]\}$ where the metric is a multiple of the Euclidean metric which will give area $1$, namely $g_{T^2}=\frac{1}{4\pi a} g_{Eucl}$.  
 The conformal factor depends only on the $x_1$ coordinate, since the symmetry in the $x_2$ coordinate will correspond to the rotational symmetry of the cap-less sphere, and we obtain the following conformal factor:
 \begin{equation}\label{e.c01conformal}
 g_{\phi}|_{(x_1, x_2)}=e^{2\phi}g_{T^2}|_{(x_1, x_2)}, \text{  with  } e^{2\phi}(x_1, x_2)=  \frac{a}{\tanh(a)\cosh(x_1)^2} 
 \end{equation}   
 Notice that the conformal factor is a nice smooth, even function of $x_1$,  as long as $x_1\in (-a,a)$, however, it is only Lipshitz  at $x_1=a$, which is identified with $x_1=-a$.
\\
Before proceeding to use this conformal factor, let's make a few more observations.
\begin{rmk}
 One can easily check that the conformal factor will have area $1$, since:
\begin{eqnarray*}
\mathrm{Area}(\dif V_{\phi})
&=&
\iint\limits_{[(0,2\pi)\times[-a,a]} \frac{a}{\tanh(a)} \frac{1}{\cosh(x_1)^2}\dif V _{T^2}
\\&=&
\frac{a}{\tanh(a)} 2\pi \frac{1}{4\pi a} \int_{-a}^a\frac{1}{\cosh(x_1)^2} \dif x_1
\\&=&\frac{2\tanh(a)}{2 \tanh(a)}
\\&=&
1
.
\end{eqnarray*}
\end{rmk} 
 \begin{rmk}  As mentioned before, the conformal factor in \eqref{e.c01conformal} was obtained by mapping the  long, skinny torus to a sphere, and then pulling back the metric and re-scaling as necessary.  Since we may think of the torus as a cylinder with ends identified, the conformal factor comes from the well-known conformal diffeomorphism of an infinite cylinder onto the sphere without the two poles.  In particular, the map $\Psi$ from the rectangle $[-a,a]\times [0, 2\pi]$  to the sphere is given as follows:
 \be
\Psi(x_1, x_2)=\left(\frac{2 e^{x_1}}{1+e^{2x_1}} \cos(x_2), \frac{2 e^{x_1}}{1+e^{2x_1}} \sin(x_2), \frac{1-e^{2x_1}}{1+e^{2x_1}} \right) \in S^2
\ee
One can compute the pulled back metric is conformally equivalent to the standard Euclidean metric on the plane with coordinates  $(x_1, x_2)$, with $\Psi^\ast g_{S^2}=\frac{1}{\cosh^2(x_1)} g_{Eucl}$, and since we are demanding that the original and conformal metrics both have area $1$, the pulled-back metric must be re-scaled by the factor of $4\pi \tanh(a)$, which is the area of the image on the sphere of the mapping of the rectangle by $\Psi$.    
\end{rmk}
In order to verify the fact that this $C^{0,1}$ conformal factor has a regularized trace, $\tilde{Z}_\phi(1)$ which is close to that of the sphere, we enlist the help of Mathematica in order to compute the integrals that arise.     We computed the change in the regularized zeta functions of the conformal and original metrics, $\tilde{Z}_\phi(1)-\tilde{Z}(1)$ which is given explicitly in terms of the data for the flat metric, the conformal factor, and $\Delta^{-1} e^{2\phi}$ in Equation \ref{e.changeofzeta}.  In order to get our hands on an explicit formula for $\Delta^{-1} e^{2\phi}$, we apply a bit of trickery which is described below.  After computing the functional for the change of the regularized trace,   in order to isolate $\tilde{Z}_\phi(1)$, we use the  explicit expression for  $\tilde{Z}(1)$ given in Equation \ref{e.regtraceineta} in terms of the Dedekind Eta function (which Mathematica knows).  

Before delving more closely into the nitty-gritty details of our computation, we must observe that our discussion of the spectral theory  for the Laplacian and the definition of $\tilde{Z}(1)$ has taken place entirely within the class of $C^\infty$ metrics.  Since   the $C^{0,1}$ conformal factor given in equation \ref{e.c01conformal} takes us outside of the class of smooth metrics, we have not actually defined what would be meant by $\tilde{Z}_(1)$ for such a Lipshitz metric.  
Rather than considering  spectral theory for  Laplacians with rough coefficients,  we will skirt the issue by defining the quantity as follows:  
\begin{equation}\label{e.defregzetanotsmooth}
\tilde{Z}^0_\phi(1)\stackrel{def}{=}\left( \frac{1}{2\pi}\int \phi(x) e^{2\phi} \dif V(x) -  \int \Delta^{-1} \etwophi
(x) e^{2\phi} \dif V(x)\right)+ \tilde{Z}(1) \text{  for   } \phi\in C^{0}
\end{equation}
Of course, the expression for the change of the regularized trace in \eqref{e.changeofzeta} shows that this definition agrees with the usual defintion of the regularized trace if $\phi\in C^{\infty}$, and the next Proposition shows that one can find a sequence of smooth conformal factors, $e^{2\phi_\epsilon}$ for which the values of $\tilde{Z}_{\phi_\epsilon}(1)$ will approach the number $\tilde{Z}^0_\phi(1)$.
  \begin{prop}  Assume that $(T^2, g)$ is the smooth flat metric, and $\tilde{Z}(1)$, $\Delta^{-1}$ and $\dif V$ correspond to the smooth background metric.  For any $\phi\in C^0(M)$ , for any $\epsilon>0$, there is a smooth  function $\phi_\epsilon$, for which  the functional  $F[\phi]=\frac{1}{2\pi}\int \phi(x) e^{2\phi} \dif V(x) -  \int \Delta^{-1} \etwophi
(x) e^{2\phi} \dif V(x)
$, may be approximated with  $|F[\phi_\epsilon]-F[\phi]|< \epsilon$.  Consequently, with $\tilde{Z}^0_\phi(1)$ defined for the non-smooth conformal factor in \eqref{e.defregzetanotsmooth}, we have:
\be
|\tilde{Z}^0_\phi(1)-\tilde{Z}_{\phi_\epsilon}(1)|=|F[\phi_\epsilon]-F[\phi]|<\epsilon \text{ or  } \tilde{Z}_{\phi_\epsilon}(1)=F[\phi]+\tilde{Z}(1)+O(\epsilon)\
\ee
\end{prop}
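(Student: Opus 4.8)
The plan is to establish the inequality $|F[\phi_\epsilon] - F[\phi]| < \epsilon$ by a density argument: approximate the continuous function $\phi$ uniformly by smooth functions $\phi_\epsilon$ (e.g.\ by mollification on $T^2$, or simply by truncating a Fourier series), and then show that the functional $F$ is continuous with respect to the $C^0$-norm on its argument. Since $\phi\in C^0(T^2)$ and $T^2$ is compact, standard approximation (convolution with a smooth bump on the universal cover, descending to the quotient) yields $\phi_\epsilon\in C^\infty(T^2)$ with $\|\phi_\epsilon - \phi\|_\infty \to 0$; moreover $e^{2\phi_\epsilon} \to e^{2\phi}$ uniformly as well, since $t\mapsto e^{2t}$ is uniformly continuous on the bounded interval containing the ranges of all the $\phi_\epsilon$ and of $\phi$. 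The conclusion about $\tilde{Z}^0_\phi(1)$ is then immediate from the definition in \eqref{e.defregzetanotsmooth}, because that definition is literally $F[\phi] + \tilde{Z}(1)$ and the regularized trace for the smooth factor $\phi_\epsilon$ is $F[\phi_\epsilon] + \tilde{Z}(1)$ by \eqref{e.changeofzeta}.

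The key steps, in order, would be: \textbf{(1)} Produce the mollified family $\phi_\epsilon$ and record that $\|\phi_\epsilon-\phi\|_\infty\to 0$, hence also $\|e^{2\phi_\epsilon}-e^{2\phi}\|_\infty\to 0$ and all the $\phi_\epsilon$ are uniformly bounded, say $|\phi_\epsilon|, |\phi| \le M$. \textbf{(2)} Control the first term $\frac{1}{2\pi}\int \phi\, e^{2\phi}\, \dif V$: writing $\phi_\epsilon e^{2\phi_\epsilon} - \phi e^{2\phi} = (\phi_\epsilon-\phi)e^{2\phi_\epsilon} + \phi(e^{2\phi_\epsilon}-e^{2\phi})$ and using $\mathrm{Vol}(T^2)=1$ together with the uniform bounds, this difference is $O(\|\phi_\epsilon-\phi\|_\infty)$. \textbf{(3)} Control the second term $\int \Delta^{-1}e^{2\phi}\cdot e^{2\phi}\,\dif V$: here one uses that $\Delta^{-1}$ is a bounded operator on $L^2(T^2)$ restricted to the orthogonal complement of constants — indeed $\|\Delta^{-1}f\|_{L^2}\le \lambda_1^{-1}\|f\|_{L^2}$ — so if $u = e^{2\phi}$ and $u_\epsilon = e^{2\phi_\epsilon}$, then $\Delta^{-1}u_\epsilon \to \Delta^{-1}u$ in $L^2$, and by Cauchy–Schwarz $\int \Delta^{-1}u_\epsilon\, u_\epsilon\,\dif V \to \int \Delta^{-1}u\, u\,\dif V$; one splits the difference as $\int (\Delta^{-1}u_\epsilon - \Delta^{-1}u)\, u_\epsilon + \int \Delta^{-1}u\,(u_\epsilon - u)$ and bounds each piece using the uniform $L^2$-bounds on $u, u_\epsilon, \Delta^{-1}u$. \textbf{(4)} Combine (2) and (3) to get $|F[\phi_\epsilon]-F[\phi]| = O(\|\phi_\epsilon-\phi\|_\infty)$, choose $\epsilon$-dependent mollification parameter to make this less than $\epsilon$, and invoke \eqref{e.changeofzeta} and \eqref{e.defregzetanotsmooth} to finish.

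The only mild subtlety — and the step I would expect to require the most care — is step \textbf{(3)}, specifically making sure that $\Delta^{-1}$ is being applied to functions in the right space. Both $e^{2\phi}$ and $e^{2\phi_\epsilon}$ have nonzero mean, but Convention \ref{conv.laplinv} tells us $\Delta^{-1}$ simply annihilates the constant part and acts by $\lambda_j^{-1}$ on the complement, so it is still a bounded self-adjoint operator on all of $L^2(T^2)$ with norm $\lambda_1^{-1}$; one just has to be slightly careful that the bound used is the operator norm on $L^2$, not anything involving the singular kernel $G(x,y)$. Everything else is a routine $\varepsilon$-chase using compactness of $T^2$, finiteness of volume, uniform boundedness of the exponentials, and Cauchy–Schwarz; no estimate on the Green's function near the diagonal is needed because the functional $F$ only involves $\Delta^{-1}$ paired against bounded functions, never evaluated on the diagonal.
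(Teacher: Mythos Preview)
Your proposal is correct and follows the same high-level strategy as the paper---show that $F$ is continuous on $C^0(T^2)$ and invoke density of $C^\infty$ in $C^0$---but the mechanism you use for continuity of the second term is genuinely different. The paper first argues finiteness of $\int (\Delta^{-1}e^{2\phi})\,e^{2\phi}\,\dif V$ via elliptic regularity and the Sobolev embedding $H_2^2\hookrightarrow C^{0,\alpha}$ (so $\Delta^{-1}e^{2\phi}$ is H\"older continuous), and then appeals to the integral-kernel representation of $\Delta^{-1}$ through the Green's function to conclude that this quadratic form depends continuously on $\phi$. You instead bypass both elliptic regularity and any Green's-function estimate by using only the spectral bound $\|\Delta^{-1}\|_{L^2\to L^2}=\lambda_1^{-1}$ together with Cauchy--Schwarz; this is more elementary and entirely self-contained, at the modest cost of not yielding the pointwise H\"older regularity of $\Delta^{-1}e^{2\phi}$ that the paper records along the way. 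Your explicit remark that ``no estimate on the Green's function near the diagonal is needed'' is exactly the point of departure: the paper leans on the kernel, you lean on the operator norm, and for the purposes of this proposition your route is the cleaner one.
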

\begin{proof}
 We first note that $F[\phi]$ is well-defined and finite for any $C^0$ function.  Since $M$ is compact, the first integral is clearly well-defined and finite.  The finiteness of the second integral follows easily from elliptic regularity and the  Sobolev Imbedding Theorem: by definition, $\Delta^{-1}(e^{2\phi})$ satisfies $\Delta(\Delta^{-1}(e^{2\phi})=e^{2\phi}-1$, where $e^{2\phi}-1$ will  be in $C^0$, and if $\phi\in C^0$,   by elliptic regularity, we know that $\Delta^{-1} e^{2\phi}$ is in the Sobolev space $H_2^2$ (two derivatives in $L^2$), and finally,   the Sobolev theorem says that  $H_2^2 \subset C^{0,\alpha}$ for dimension $n=2$ as long as $\alpha < 1$, so $\Delta^{-1} e^{2\phi} \in C^{0, \alpha}$, and therefore   $\int (\Delta^{-1} e^{2\phi} ) e^{2\phi}$ is finite.  Moreover, by considering the fact that $\Delta^{-1}e^{2\phi}$ has an integral kernel representation (the integral kernel is, of course, the Green's function), so we know that $\int (\Delta^{-1} e^{2\phi}) e^{2\phi}$ is continuous with respect to $\phi$, and thus  $F$ is a continuous operator on $C^0(M)$.  Now, since smooth functions form a  dense subset of the continuous functions, there is some $\phi_\epsilon$ such that we can approximate $F[\phi]$ with $F[\phi_\epsilon]$ as desired.
\end{proof}
Now we can return to our describing our computation for the $C^{0,1}$ metric with a clear conscience.

\paragraph{Computing $F[\phi]=\frac{1}{2\pi}\int \phi(x) e^{2\phi} \dif V(x) -  \int \Delta^{-1} \etwophi
(x) e^{2\phi} \dif V(x)$}
     Before setting Mathematica to the task of computing the functional $F[\phi]$, we need to compute $\Delta^{-1} e^{2\phi}$.   According to our definition of the operator $\Delta^{-1}$, the potential must satisfy the following two requirements:
\be
\Delta (\Delta^{-1}(e^{2\phi})=e^{2\phi}-1 \text{   and   } \int_{T^2} (\Delta^{-1}(e^{2\phi}) \dif V_{T^2}=0
\ee
We compute the potential in three steps:
\ben
\item We first exploit the fact that the conformal factor arises from the map to the sphere, so for any point in the interior of the rectangular fundamental domain, the Gaussian curvature of the conformally equivalent metric will be exactly $1$.  From the change of curvature equation, we have (locally):
\be
K_\phi=1=\frac{1}{e^{2\phi}} (\Delta \phi +K_{T^2})=\frac{1}{e^{2\phi}} \Delta \phi  \text{  or } e^{2\phi}=\Delta \phi
\ee
Consequently, we see that $\phi$ almost solves the equation $\Delta (\Delta^{-1}(e^{2\phi})=e^{2\phi}-1$.  
\item  Now, from the curvature equation we see that  we must have $\Delta^{-1}e^{2\phi}=\phi+f$, where $\Delta f = -1$.  The need for the corrective function $f$ should not be  surprising given the local nature of the change of curvature equation, and the necessarily global behavior of a solution to a Poisson equation.  Since the metric on $T_2$ is just a scaled version of the Euclidean metric, and we know that $\Delta_{Eucl} (x_1)^2=-(\partial_{x_1, x_1}(x_1)^2+\partial_{x_2, x_2}(x_1)^2)=-2$, and we  find that the correct scaling is given by  $f=\frac{1}{8\pi a} (x_1)^2+c$.  Up to the normalization of the additive constant, we have
\be 
\Delta^{-1} e^{2\phi}=\phi+\frac{1}{8\pi a} (x_1)^2+c \text{  satsifies  } \Delta (\Delta^{-1}(e^{2\phi})=e^{2\phi}-1
\ee
\item  Finally, the constant $c$ is chosen to ensure that $\int \Delta^{-1} e^{2\phi} \dif V = 0$, so we have \be 
c=\frac{-1}{2a} \int_{-a}^a \left(  \frac{1}{2}\log(\frac{a}{\tanh(a)\cosh(x_1)^2} )   + \frac{(x_1)^2}{8\pi a} \right)\dif x_1, 
\ee
and consequently, the potential for the conformal factor,  $\Delta^{-1} e^{2\phi}$ is given by:
\be
\Delta^{-1} e^{2\phi}(x)=\phi(x)+\frac{1}{8\pi a} (x_1)^2+ \frac{-1}{2a} \int_{-a}^a \left(  \frac{1}{2}\log(\frac{a}{\tanh(a)\cosh(x_1)^2} )   + \frac{(x_1)^2}{8\pi a}\right) \dif x_1
\ee
\een 
 We then use Mathematica to compute the expression for the change of the regularized Zeta function for the original flat metric and the conformally equivalent metric, and the expression exhibited the correct asymptotics, with $\tilde{Z}_\phi-\tilde{Z}(1)=\frac{-a}{12\pi}+o(a)$, which easily demonstrates that the conformal change of metric reduces the value of the regularized trace.\footnote{According to Mathematica, $F[\phi]=\frac{1}{48 a (-1+e^{4 a}) \pi }
(\text{Coth}[a] (\text{Cosh}[2 a]+\text{Sinh}[2 a]) (48 a-64 a^2-48 a \text{Cosh}[2 a]+
16 a^2 \text{Cosh}[2 a]-24 a \text{Log}[4]+24 a \text{Cosh}[2 a] \text{Log}[4]+
48 a \text{Log}[1+e^{2 a}]-48 a \text{Cosh}[2 a] \text{Log}[1+e^{2 a}]-
24 a \text{Log}[\frac{a (1+e^{2 a})}{-1+e^{2 a}}]+24 a \text{Cosh}[2 a] \text{Log}[\frac{a (1+e^{2 a})}{-1+e^{2 a}}]+
24 a \text{Sinh}[2 a]+12 a^2 \text{Sinh}[2 a]+\pi ^2 \text{Sinh}[2 a]+
48 a \text{Log}[1+e^{2 a}] \text{Sinh}[2 a]-6 \text{PolyLog}[2,-e^{-2 a}] \text{Sinh}[2 a]+
18 \text{PolyLog}[2,-e^{2 a}] \text{Sinh}[2 a]))$.}:
 \be
 F[\phi]=\tilde{Z}^0_\phi(1)-\tilde{Z}(1)=\frac{1}{2\pi}\int \phi(x) e^{2\phi} \dif V(x) -  \int \Delta^{-1} \etwophi
(x) e^{2\phi} \dif V(x)
\ee
\paragraph{Computing $\tilde{Z}^0_\phi(1)$}
In Section \ref{sec.background} we had recorded Chiu's observation that  the regularized Zeta function $\tilde{Z}(1)$  for the flat torus is linearly related to the log-determinant, and therefore, by using the expression in \eqref{e.regtraceineta}, $\tilde{Z}(1)$    may be written in terms of  the Dedekind Eta function,  which Mathematica can compute. Mathematica then computed: 
\begin{eqnarray*}
\tilde{Z}^0_\phi(1)
&=&
\tilde{Z}(1)+ \frac{1}{2\pi}\int \phi(x) e^{2\phi} \dif V(x) -  \int \Delta^{-1} \etwophi
(x) e^{2\phi} \dif V(x)
\\&=&
\frac{1}{4\pi}(-\log((2\pi)^2 \frac{\pi}{a}  |\eta(\frac{i \pi}{a})|^4))+\frac{2\log(2\pi)}{4\pi} -\frac{2\log 2}{2\pi}-\frac{1}{2\pi} \log \pi +\frac{\gamma}{2\pi}+
\\&&
\frac{1}{2\pi}\int \phi(x) e^{2\phi} \dif V(x) -  \int \Delta^{-1} \etwophi
(x) e^{2\phi} \dif V(x)
\end{eqnarray*}
The plot of this expression against $a$ shows that as $a$ grows and therefore the rectangle becomes longer and skinnier $\tilde{Z}_\phi(1)$ approaches the value on the unit area sphere $\tilde{Z}_{S^2}(1)\approx -.189$.    \\
\begin{figure}[h]
\hspace{1.5in}\includegraphics[width=2in]{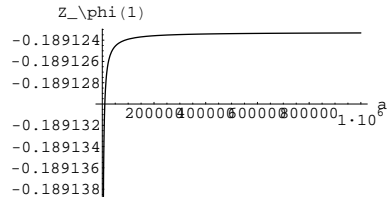}
\caption{ As $a$, the parameter of long-skinniness grows, $\tilde{Z}_\phi$ approaches $\tilde{Z}_{S^2}(1) \approx -.189$ }
\label{f.mathematicagraph}
\end{figure}
Now, if one does not like the fact that we are using the numerical behavior in order to verify the Theorem \ref{thm.blowbubble}, we should point out that all of the integrals and functions are explicitly known, so one could do an asymptotic analysis of all of the integrals involved in order to show that $\tilde{Z}_\phi =\tilde{Z}_{S^2}(1)+O(\frac{1}{a})$ directly. 
\paragraph{An alternate approach: smoothing out the conformal factors}
We should also comment on an alternate method of proof of Theorem \ref{thm.blowbubble}.    At present, we will only outline the method, because it is currently tailored to this particular set of conformal factors, and we are in process of developing a more generalizable method of making these kinds of estimates.  One could work with a smoothed out version of the conformal factor given in \eqref{e.c01conformal}.  If we take a bump function, $\psi(x)$  that is $1$ on $[-a+1, a-1]$, and $0$ on $[-a, -a+1 \frac{1}{4\sqrt{\pi}}]\cup [a-1+\frac{1}{4\sqrt{\pi}}, a] $.   The smoothed-out conformal factor may be given as follows:
\begin{equation}
e^{2\tilde{\phi}}=\left\{
\begin{array}{r@{\quad\quad\quad} l}
\frac{a}{\cosh^2(x_1)} & x_1\in [-a+1, a-1] \text{  (bubble)} \label{e.conffactor}\\
\frac{a}{\cosh^2(x_1)}\psi(x)+c_2(1-\psi(x))  & x_1\in [-a+1-\frac{1}{4\sqrt{\pi}}, -a+1]\cup [a-1, a-1+\frac{1}{4\sqrt{\pi}}] \nonumber \\
c_2=O(e^{-2a})   & x_1\in [-a, -a+1-\frac{1}{4\sqrt{\pi}}]\cup [a-1+\frac{1}{4\sqrt{\pi}}, a] \nonumber
\end{array}
\right.
\end{equation}
The area of the`bubble' region on which $x_1 \in [-a+1, a-1]$ will be $\tanh(a-1)=1+O(e^{-2a})$, and the choice of endpoints is made with the injectivity radius in mind.  One can then give a proof of Theorem \ref{thm.blowbubble} by writing $\tilde{Z}_{\tilde{\phi}}(1)$ in terms of the integral of the Robin's mass, $m_{\tilde{\phi}}(x)$, which is the mass-like term in the Green's function for the Laplacian.   The relationship between the regularized trace and the integral of the Robin's mass was given by Morpurgo in \cite{Morpurgo_Duke}, and by Steiner in \cite{steiner_duke}, and is given as follows for the data of any metric $g$ (as was recorded in Proposition \ref{p.massisspecdensity}:
 \be
\tilde{Z}_(1)=\int m(x) \dif V(x) -2\log 2 + 2\gamma  \text{   where   } m(x)=\lim_{y\rightarrow x}[ G(x,y)+\frac{1}{2\pi} \log (d(x,y))]
\ee
If we work with this formulation of the regularized trace, we can prove Theorem \ref{thm.blowbubble} by showing that on the bubbled region, the Green's function for the conformally equivalent metric is close to that of the Green's function for the sphere, and therefore $m_{\tilde{\phi}}(x)\approx m_{S^2}$ for most of the region, and on the remaining  regions in the torus (there is a region on which  the metric is constant and $O(e^{-2a})$, and there is a  gluing region connecting the bubble to the flat part), the contribution of the integral of the Robin's mass is suitably small.  To be more precise, we can prove the following two statements:
we will show that for $x\in T^2$, where $x$ is at the edge of or outside of the spherical portion,  the contribution of $\int m_\phi(x) \dif V_\phi (x)$ to $\tilde{Z}(1)$ is small, and for $x\in T^2$, in the spherical portion, we will show that $m_\phi(x) \approx m_{S^2}$, so for most of the area of the torus, we will have $\int m_\phi(x) \dif V_\phi(x)=\int m_S^2 \dif V_\phi(x)$.    Now, we will say a few more words about how these two steps are carried out.
\ben
\item  In order to see that $m_\phi(x)\approx m_{S^2}$ for $x$ in the bubble region, recall that the map $\Psi$ is a diffeomorphism from the torus to the sphere, and we use $\Psi$ to pull-back the spherical metric in order to obtain the conformal factor on the bubble region.  If $x$ is well-within the  bubble region, \ie $x=(x_1, x_2)$ with $x_1\in (-a+2, a-2)$, and $\Phi$ is a bump function that is supported within the bubble region, then  for any $y\in T^2$, the Green's function is given by $G_{\tilde{\phi}}(x,y)=G^\ast_{S^2}(\Psi(x),\Psi(y)) \Phi(x)+F(x,y)$.  We can easily show that for any $x$, the $C^0$ norm of $F(x,y)$ is small, by using an argument that is similar to the argument one gives in constructing the Green's function.  Consequently, $m_{\tilde{\phi}}(x)\approx m_{S^2}+||F(x,\cdot)||_{C^0}$ on a region of the torus that has most of the volume of the conformally equaivalent metric, since the bubble region has  volume $1-O(e^{-2a})$.  

\item If $x=(x_1, x_2)$ with $x_1 \in [-a, -a+2]\cup [a-2, a]$, then we can show that, at worst,  $|m_{\tilde{\phi}}|\leq p(a)$, where $p(a)$ is a polynomial in $a$.  However, since the area of this region is $O(e^{-2a})$, by a simple  $0; \infty$  H\"{o}lder estimate, we see that the contribution of the integral of the Robin's mass on this region will be small.
\een
At present, we do not give further details on these estimates, because they have been specifically suited to this particular example.  Work is underway to develop more broadly applicable methods of estimating the Green's function.  This is a subtle business, because one can show that there is a bubbling-off type phenomenon that can drive the regularized trace up.  
\end{proof}

\section*{Addendum: Any torus can beat the sphere}
Using conformal factors which, like ours, depend only on longitude,
Okikiolu
\cite{Okikiolububble}\label{pushdown}
has shown that
within the conformal class of any torus
there is a metric with trace less than that of the sphere.
Looking back at 
Figure \ref{f.mathematicagraph},
we observe that the approach of the trace of our bubbles
to that of the sphere is \emph{from below}.
In hindsight, we should have expected this.
The trace measures how long it takes a Brownian particle
to get near a randomly selected point.
(See Doyle and Steiner \cite{hideandseek}.)
Our bubbled tori are like a sphere with a portal that teleports
you from near one pole to near the other.
This shortcut should make it easier for a Brownian particle to
get around on the surface and find a randomly selected target point.

Now, the graph in Figure \ref{f.mathematicagraph}
pertains to rectangular tori.
But adding a twist before glueing up
a skinny cylinder to a flat torus will only decrease the trace.
This can be verified by computation, but in a second we'll give
a probabilistic explanation.
So, twisting before glueing up a skinny torus decreases its trace.
But now the further decrease of the trace that we get from
our longitudinal bubbling
is exactly the same with or without the twist,
because the twisting doesn't show up when you use
Morpurgo's formula \eqref{e.changeofzeta}
to compute the change of the trace under a longitudinal conformal variation.
So longitudinal bubbling can push the trace of any sufficiently skinny torus,
rectangular or not,
below that of the sphere.

The reason twisting before glueing should decrease the trace of a skinny torus
is that on a rectangular flat torus,
the longitudinal diffusion of a Brownian particle
is independent of its meridional diffusion.
Putting in a twist destroys this independence
by giving a particle that has drifted all the way
around longitudinally a shove around the meridian.
This ought to help it get near a randomly selected target point.
This is all providing the torus was sufficiently skinny:
This argument is not going to hold up for a fat torus,
or a skinny torus where you have misidentified the longitudinal direction.
The effect will be slight, because by the time it gets all the way
around a skinny torus the long way a Brownian particle
will have nearly forgotten where it started meridionally.
Still, there will be some effect, and it should work in the right
direction.
Once again, this can be verified by computation.

Now, as indicated, this all pertains only to sufficiently skinny tori.
But flat tori that are sufficiently fat start with trace less than
that of a sphere.
And you can check
(cf. Okikiolu
\cite{Okikiolububble})
that any torus is either sufficiently skinny for
our method to work, or sufficiently fat for our method to be unnecessary.
So the trace of any torus can be made less than that of a sphere.

We only realized all this after seeing
Okikiolu
\cite{Okikiolububble}.
Her work is based on identifying the longitudinal conformal factor that
most decreases the trace, and thus goes far
beyond simply producing one suitable conformal factor,
as we have done here.

We originally developed
this longitudinal bubble approach because we didn't know how to
blow a bubble locally, within any small patch on the torus.
Or rather, we didn't know how to show that the result has 
trace close to that of a sphere.
Okikiolu
\cite{Okikiolutrace}
solves this problem.
It's a relief to know that you can blow bubbles locally,
and get the trace close to that of a sphere.
Meanwhile, it's gratifying that our longitudinal bubbles,
which we viewed as a poor substitute for local bubbles,
actually turn out to work better, and
push the trace below that of the sphere.

We close by noting that we still
lack tools to translate probabilistic
intuitions about the trace into proofs.
Good examples are the plausibility arguments above purporting
to explain why the trace decreases when
you add a small portal to teleport you between the poles of a sphere,
or twist before glueing up a skinny cylinder.
We have tried to explain these facts 
probabilistically,
but in the end we are reduced to verifying them by computation.
We would like tools that would let us
nail down these probabilistic
intuitions and translate them into proofs.
These tools needn't be probabilistic in nature.
In many cases, the most useful role of probability in analysis is advisory:
Probabilistic intuition suggests the way to an analytic proof.
Of course, we can always let probabilistic intuition suggest what might be
true, but we want more than that.
We want the probability to suggest not just the result, but the way to 
prove it.

\bibliographystyle{hplain}
\bibliography{bubble}

\end{document}